\documentclass{amsart}
\pdfoutput=1
\usepackage{hyperref}
\usepackage{amsmath,amssymb,amsthm, tikz}
\usepackage{xcolor}
\usepackage{fancyhdr}
\usepackage{graphicx}
\usepackage{dirtytalk}
\usepackage{amsmath,amssymb,mathrsfs,graphicx,setspace,array,vmargin,fancyhdr}
\usepackage{enumitem}
\usepackage{wrapfig}
\hypersetup{hidelinks}

\newtheorem{theorem}{Theorem}[section]
\newtheorem{lemma}[theorem]{Lemma}
\newtheorem{corollary}[theorem]{Corollary}

\theoremstyle{definition}
\newtheorem{definition}[theorem]{Definition}
\newtheorem{example}[theorem]{Example}

\newtheorem{remark}[theorem]{Remark}

\newtheorem{thmintro}{Theorem}

\newtheorem{corintro}[thmintro]{Corollary}

\begin{document}
\date{\today}
\title{Enumerating Integral Points on Triangles in $\mathbb{R}^3$}
\subjclass[2020]{11D04, 11D07, 52B20}
\keywords{Denumerant, Ehrhart polynomial, Frobenius number, Linear Diophantine Equations, Pick's Theorem, Popiviciu's Theorem, Restricted Partition Functions.}
\author[M. Aldrich]{Max Aldrich}
\address{Adams State University, 208 Edgemont Blvd, Alamosa, CO 81101, USA}
\email{maxaldrich72@gmail.com}
\author[D. Gulbrandsen]{Daniel Gulbrandsen}
\address{Adams State University, 208 Edgemont Blvd, Alamosa, CO 81101, USA}
\email{dangulbrandsen@yahoo.com}
\author[C. Keddy]{Clara Keddy}
\address{Adams State University, 208 Edgemont Blvd, Alamosa, CO 81101, USA}
\email{clarakeddy5@gmail.com}
\begin{abstract}
    We present two ways of constructing  Ehrhart polynomials corresponding to certain triangles in $\mathbb{R}^3$. One of our methods employs Popoviciu's Theorem, while the other method uses a version of Pick's Theorem adapted to rank-2 lattices in $\mathbb{R}^3$. We then apply our methods to linear Diophantine equations of the form $ax+by+cz=n$ and develop upper and lower bounds on the number of solutions in nonnegative integers $x$, $y$, and $z$. The bounds we develop are closed form quasi-polynomials, and the difference between them is bounded by a constant depending only on $a$ and $b$. As an application, we prove that the $k$th Frobenius number exists for any triple of positive integers and any $k$.
\end{abstract}

\maketitle

\tableofcontents

\section{Introduction}

At the heart of this article, we are interested in the number of solutions in nonnegative integers $x$, $y$, and $z$ to linear Diophantine equations of the form 
\begin{equation}\label{eqn:1}
    ax+by+cz=n,
\end{equation}
for positive integers $a$, $b$, $c$, and $n$. For a concrete example, a person with $x$ nickels, $y$ dimes, and $z$ quarters has a total of $5x+10y+25z$ cents. Another person can have a different combination of nickels, dimes, and quarters that also sum to a total of $5x+10y+25z$ cents. It is therefore reasonable to ask how many different combinations of nickels, dimes, and quarters sum to the same number $n$, for various values of $n$.

This problem has origins in \cite{Sylvester1882}, where Sylvester considered the number of solutions to equations of the form $ax+by=n$. Therein it is shown that, for relatively prime positive integers $a$ and $b$, the largest integer $n$ for which $ax+by=n$ has no solutions in nonnegative integers $x$ and $y$ is given by $n=ab-a-b$. Then, in \cite{sylvester1884problem}, Sylvester posed his theorem as a recreational problem. This appears to have captured the interest of Frobenius who is often credited with popularizing similar questions during their lectures (\cite{Schur1926}, \cite{Brauer1942}). In particular, Frobenius considered linear Diophantine equations with arbitrarily many terms, and asked for the largest value of $n$ for which $a_1x_1+a_2x_2+\cdots+a_kx_k=n$ has no solutions in nonnegative integers. Such an integer $n$ is now called the {\it Frobenius number} of $a_1,\ldots,a_k$. For a general reference on the Frobenius number, consult \cite{ramirez2005diophantine}. Formulas that compute the Frobenius number for three terms can be found in \cite{fang2024graphical}.

This paper, in some ways, addresses the opposite extreme  case. That is, we consider values of $n$ for which Equation \ref{eqn:1} has many solutions. Note that, since $a,b,c>0$, the question of how many nonnegative integer solutions are there to Equation \ref{eqn:1} is equivalent to asking how many points with nonnegative integer-valued coordinates (integral points) are contained on the plane with equation $ax+by+cz=n$. This allows for geometric techniques to be used. One in particular is {\sl Pick's Theorem}, which relates the number of integral points contained on a polygon to the polygon's area \cite{pick1899geometrisches}. However, the scope of Pick's Theorem is limited to polygons in $\mathbb{R}^2$, and we often consider polygons in higher dimensions. Formulas that count the number of integral points contained on more general polyhedra were later developed by Ehrhart \cite{ehrhart1962polyedres}. In the case of a polygon in $\mathbb{R}^3$, an adjustment to Pick's theorem is necessary. Essentially, the area term must be replaced by a term that, in some sense, accounts for the fact that distances between integral points may have changed. See Lemma \ref{lem:Picks in 3D} below. 

The tools employed by Ehrhart, while now considered classical, can present a significant hurdle for students just beginning to learn the theory. Beck and Robins in \cite{beck2015computing} and Barvinok in \cite{Barvinok1994Algorithm}, \cite{Barvinok1994Ehrhart}, and especially \cite{barvinok2008integer} modernize the notation some  and provide pivotal algorithms that simplify the process. We provide methods which are accessible to anyone who has taken a basic linear algebra course, and does not mind learning a few basic facts about modular arithmetic. Although our results are mostly limited to polygons in $\mathbb{R}^3$, we believe the contents of this paper highlight a key example in the theory, and constitute an ideal setting for beginners of computational geometry.
\medskip

In Chapter \ref{sec:prelim}, we provide the necessary background material on lattices, restricted partition functions, and Ehrhart Theory. Readers familiar with these topics may skip this chapter, using it as a reference when needed. Chapter \ref{sec:main results} contains the main results. In section \ref{subsec:axial triangles}, we provide two proofs of the following geometric fact.  
\begin{thmintro}\label{thm:A}
Let $T\subset\mathbb{R}^3$ be a triangle with vertices $(a,0,0)$, $(0,b,0)$, and $(0,0,c)$, for positive integers $a$, $b$, and $c$. Then the number of points in $T\cap\mathbb{Z}^3$ is given by the expression
\[
    \frac{\gcd(bc,ac,ab)}{2}+\frac{{\gcd(a,b)+\gcd(a,c)+\gcd(b,c)}}{2}+1.
\]
\end{thmintro}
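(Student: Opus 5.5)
We will prove the formula with a version of Pick's Theorem for rank-2 lattices (the Lemma on Pick's Theorem in 3D referenced in the introduction). Let $\Pi$ be the plane through the three vertices, namely $bc\,x+ac\,y+ab\,z=abc$. Set $L=\Pi\cap\mathbb{Z}^3$ and let $\Lambda=\{v\in\mathbb{Z}^3: bc\,v_1+ac\,v_2+ab\,v_3=0\}$ be the rank-2 lattice of integer directions in $\Pi$. Since $(a,0,0)\in L$, the set $L$ is a translate of $\Lambda$. The 3D Pick formula then says
\[
|T\cap\mathbb{Z}^3| = \frac{\operatorname{Area}(T)}{\operatorname{covol}(\Lambda)} + \frac{B}{2} + 1,
\]
where $B$ is the number of lattice points on the boundary of $T$ and $\operatorname{covol}(\Lambda)$ is the area of a fundamental parallelogram of $\Lambda$. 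The proof therefore reduces to three computations.

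\emph{Boundary count.} The edge from $(a,0,0)$ to $(0,b,0)$ has primitive direction $(a,-b,0)/\gcd(a,b)$, so it carries $\gcd(a,b)$ lattice segments. The same holds for the other two edges. Hence $B=\gcd(a,b)+\gcd(a,c)+\gcd(b,c)$, which gives the middle term of the statement.

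\emph{Area.} The area is $\tfrac12\|(-a,b,0)\times(-a,0,c)\| = \tfrac12\|(bc,ac,ab)\|$.

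\emph{Covolume.} This is the main step. For a primitive integer normal vector $w=(bc,ac,ab)/g$ with $g=\gcd(bc,ac,ab)$, the lattice $w^\perp\cap\mathbb{Z}^3$ has covolume $\|w\|$. To prove this, extend a basis $u_1,u_2$ of $\Lambda$ to a basis $u_1,u_2,u_3$ of $\mathbb{Z}^3$; this is possible because $\Lambda$ is saturated, being the kernel of a linear map. Then $|\det(u_1,u_2,u_3)|=1$ equals $\|u_1\times u_2\|$ times the distance from $u_3$ to the plane, which is $|u_3\cdot w|/\|w\|$. Primitivity of $w$, via Bézout, forces $u_1\times u_2=\pm w$. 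In fact $u_1\times u_2$ is an integer multiple of $w$, and $u_3\cdot(u_1\times u_2)=\pm1$, so that multiple is $\pm1$.

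Combining these, $\operatorname{Area}(T)/\operatorname{covol}(\Lambda) = \tfrac12 g\|w\|/\|w\| = g/2$, which is the first term of the statement.

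The main obstacle is justifying the covolume identity, specifically the claim that $u_1\times u_2$ is exactly $\pm w$. If the earlier lemma already expresses the area normalization in terms of the primitive normal, we cite it directly. As a fallback, a second route is to project $T$ onto the $xy$-plane. Write $T$ as the set of points $(x,y)$ with $0\le x$, $0\le y$, $x/a+y/b\le 1$ for which $z=c(1-x/a-y/b)$ is an integer. This turns the count into a sum of restricted partition counts, which Popoviciu's theorem evaluates; the fractional-part terms would then have to telescope to the gcd expression. We would use this only as a check, since the Pick route is shorter.
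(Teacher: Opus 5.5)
Your proposal is correct and is essentially the paper's second proof: you apply the rank-2 Pick formula (Lemma~\ref{lem:Picks in 3D}) to the lattice of integer directions in the plane $bcx+acy+abz=abc$, using boundary count $\gcd(a,b)+\gcd(a,c)+\gcd(b,c)$, area $\tfrac12\|(bc,ac,ab)\|$ and covolume $\|(bc,ac,ab)\|/\gcd(bc,ac,ab)$, so the area term is $\gcd(bc,ac,ab)/2$. Your basis-extension argument for the covolume supplies the verification the paper leaves to the reader (it only exhibits basis vectors $\vec v_1,\vec v_2$ and states $\det(\Lambda)$), and your Popoviciu fallback is the paper's first proof.
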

\noindent The first proof is entirely combinatorial and does not seem to appear in the literature, although we rely on techniques found in most standard textbooks on the subject. The second proof is more geometric and follows from classical results already present in \cite{pick1899geometrisches} and \cite{ehrhart1962polyedres}. There is also a third proof, which can be obtained by applying Ehrhart's Theorem for rational polytopes \cite{ehrhart1962polyedres}. However, the machinery needed to do this proof justice would take us too far afield from our primary goals, and so, the details are left to the reader. 
\medskip

Given the three points $(a,0,0)$, $(0,b,0)$, and $(0,0,c)$ that define the triangle $T$ from Theorem \ref{thm:A}, we can leverage tools from linear algebra to get the equation of the plane conaining $T$: $bcx+acy+abz=abc$. Clearly, point on $T$ is a solution to this equation.

\begin{example}
   A method attributed to early Egyptians is to write the reciprocal of a positive integer $n$ as the sum of three unit fractions: $\frac{1}{a}+\frac{1}{b}+\frac{1}{c}=\frac{1}{n}$, for positive integers $a$, $b$, and $c$. Multiplying both sides of this equation by $abcn$ gives us $bcn+acn+abn=abc$. It follows that $\frac{1}{n}$ can be written as $\frac{1}{a}+\frac{1}{b}+\frac{1}{c}$ if and only if the point $(n,n,n)$ is on $T$.  
\end{example}

As a corollary of Theorem \ref{thm:A}, we get the following number-theoretic result.
\begin{corintro}\label{cor:B}
    Let $a$, $b$, and $c$ be positive integers that all divide $n$. Then the number of solutions in nonnegative integers to $ax+by+cz=n$
    is given by the expression
    \[
        \frac{\gcd(n^2/bc,n^2/ac,n^2/ab)}{2}+\frac{\gcd(n/a,n/b)+\gcd(n/a,n/c)+\gcd(n/b,n/c)}{2}+1.
    \]
\end{corintro}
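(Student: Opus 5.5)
The plan is to reduce Corollary \ref{cor:B} directly to Theorem \ref{thm:A} by recognizing the solution set as the set of lattice points on an axial triangle. Since $a,b,c$ are positive, a nonnegative integer solution $(x,y,z)$ of $ax+by+cz=n$ is exactly a point of $\mathbb{Z}^3$ lying in the closed triangle cut out of the plane $ax+by+cz=n$ by the first octant. That triangle has vertices $(n/a,0,0)$, $(0,n/b,0)$ and $(0,0,n/c)$. The hypothesis that $a$, $b$ and $c$ all divide $n$ guarantees these intercepts
\[
A=n/a,\qquad B=n/b,\qquad C=n/c
\]
are positive integers. So the triangle is precisely of the form treated in Theorem \ref{thm:A}, with $(A,B,C)$ in place of $(a,b,c)$.

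Next I will verify that the plane through $(A,0,0),(0,B,0),(0,0,C)$ coincides with the plane $ax+by+cz=n$. Its equation is $x/A+y/B+z/C=1$, which after substituting $1/A=a/n$ and so on is exactly $ax+by+cz=n$. I will also check that the triangle (the convex hull of the three vertices) equals the intersection of this plane with the nonnegative octant. This is the standard fact that a point with nonnegative coordinates satisfying $x/A+y/B+z/C=1$ is the convex combination of the vertices with weights $x/A$, $y/B$, $z/C$. The converse is immediate. Hence $T\cap\mathbb{Z}^3$ is in bijection with the nonnegative integer solutions.

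Finally I will apply Theorem \ref{thm:A} to obtain the count
\[
\frac{\gcd(BC,AC,AB)}{2}+\frac{\gcd(A,B)+\gcd(A,C)+\gcd(B,C)}{2}+1.
\]
Substituting $BC=n^2/bc$, $AC=n^2/ac$ and $AB=n^2/ab$ gives the stated expression.

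There is no real obstacle here. The only step needing care is the identification of the triangle with the octant slice of the plane, which is where positivity of $a,b,c$ and the divisibility hypothesis are both used.
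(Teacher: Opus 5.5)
Your proposal is correct and is essentially the paper's argument: the paper proves the equivalent form Corollary \ref{cor:MainResult}, with $n=kl$ and $l={\rm lcm}(a,b,c)$. There it sets $A=n/a$, $B=n/b$, $C=n/c$, notes that $BCx+ACy+ABz=ABC$ has the same solutions as $ax+by+cz=n$, and applies Theorem \ref{thm:Clara1}; your identification of the octant slice of the plane with the axial triangle on $(A,0,0)$, $(0,B,0)$, $(0,0,C)$ is the same rescaling phrased geometrically, using Theorem \ref{thm:A} in place of its partition-function form.
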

We use Corollary \ref{cor:B} to describe the Ehrhart polynomials of certain triangles in $\mathbb{R}^3$ (see Remark \ref{rmk:Ehrhart Polynomials} below). Since our first proof of Theorem \ref{thm:A} is independent of both Pick's Theorem and Ehrhart's construction, we thus arrive at a new derivation of these Ehrhart polynomials. Our results have some overlap with results in \cite{Ion2013}. Therein Iona{\c{s}}cu computes the Ehrhart polynomial of any equilateral triangle with vertices in $\mathbb{Z}^3$. In particular, both our results and those \cite{Ion2013} can be used to derive the Ehrhart polynomial when $a=b=c$. Note that the question of which polynomials could possibly be Ehrhart polynomials for triangles in lattices is addressed in \cite{hofscheier2018ehrhart}.

For another interpretation of the number of solutions to Equation \ref{eqn:1}. Consider three bins labeled A, B, and C, and suppose we have $n$ indistinguishable objects that we must distribute among the three bins where bin A only accepts groups of size $a$, bin B only accepts groups of size $b$, and bin C only accepts groups of size $c$. Assume first that not every bin is required to contain any objects. If $x$, $y$, and $z$ denote the numbers of groups that have been placed in bin A, bin B, and bin C, respectively, then the number of solutions in nonnegative integers to $ax+by+cz=n$ is equal to the total number of ways the $n$ objects could be distributed among the three bins. If  we instead require each bin to contain at least one object, then the total number of ways the $n$ objects could be distributed is equal to the number of solutions in positive integers to Equation \ref{eqn:1}.
\begin{example}\label{example:1}
    Let $B(n)$ denote the number of ways we can distribute $n$ objects among three bins without any requirement on how many objects must be grouped together to be placed in a bin. Clearly, $B(n)$ equals the number of solutions in nonnegative integers to $x+y+z=n$. Thus, by Corollary \ref{cor:B}, 
    \[
        B(n)=\frac{n^2}{2}+\frac{3n}{2}+1.
    \]
\end{example}
In Section \ref{subsec:bounds}, we then adapt the method used in proving Theorem \ref{thm:A} to produce upper and lower bounds to the number of solutions to $ax+by+cz=n$. While the proof assumes $a$, $b$, and $c$ to be pairwise relatively prime, it is explained in the paragraphs preceding the proof that this does not come at the cost of any generality.
\begin{thmintro}\label{thm:C}
    Given pairwise relatively prime positive integers $a$, $b$, and $c$, let $P(n)$ denote the number of solutions in nonnegative integers to $ax+by+cz=n$, for some positive integer $n$. Then there are quasi-polynomials $U(n)$ and $L(n)$ such that, for all positive integers $n$,
    \begin{enumerate}[label={\roman*.}\;]
        \item $L(n)\le P(n)\le U(n)$;
        \item $U(n)-L(n)\le\frac{a+b}{4}$; and
        \item $U(n)$ and $L(n)$ both have period $abc$.
    \end{enumerate}
\end{thmintro}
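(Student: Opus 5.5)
Fix $z$ and count the two-variable problem slice by slice. For each $0\le z\le \lfloor n/c\rfloor$, let $m=n-cz$; then the number of nonnegative solutions $(x,y)$ to $ax+by=m$ is given by Popoviciu's Theorem (which we assume from the preliminaries):
\[
p_{a,b}(m)=\frac{m}{ab}-\left\{\frac{b^{-1}m}{a}\right\}-\left\{\frac{a^{-1}m}{b}\right\}+1,
\]
where $b^{-1}$ is an inverse of $b$ modulo $a$ and $a^{-1}$ an inverse of $a$ modulo $b$. Hence
\[
P(n)=\sum_{z=0}^{\lfloor n/c\rfloor}p_{a,b}(n-cz).
\]
The main term $\sum_z\bigl(\frac{n-cz}{ab}+1\bigr)$ is an explicit quasi-polynomial in $n$ with period $c$, since it depends only on $\lfloor n/c\rfloor$, i.e.\ on $n$ and $n\bmod c$. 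It remains to handle the fractional-part sums
\[
F(n)=\sum_{z=0}^{\lfloor n/c\rfloor}\left(\left\{\frac{b^{-1}(n-cz)}{a}\right\}+\left\{\frac{a^{-1}(n-cz)}{b}\right\}\right).
\]

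Since $\gcd(c,a)=1$, as $z$ runs through $a$ consecutive values the residues $b^{-1}(n-cz)\bmod a$ run through a complete residue system. Each such block therefore contributes exactly $\sum_{r=0}^{a-1}r/a=\frac{a-1}{2}$ to the first sum, and similarly each block of $b$ consecutive values of $z$ contributes $\frac{b-1}{2}$ to the second. Write $N=\lfloor n/c\rfloor+1=qa+s$ with $0\le s<a$, and similarly $N=q'b+s'$ with $0\le s'<b$. The first sum is then $q\frac{a-1}{2}$ plus a leftover sum of $s$ fractional parts. That leftover lies between the minimum and maximum possible sum of $s$ distinct values from $\{0,\frac1a,\dots,\frac{a-1}{a}\}$, namely between $\frac{s(s-1)}{2a}$ and $\frac{s(2a-s-1)}{2a}$. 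Both endpoints are explicit functions of $s$, hence of $n$ modulo $ac$: the quantity $s$ depends on $\lfloor n/c\rfloor \bmod a$, which is determined by $n\bmod ac$. The same holds for the $b$-part with period $bc$.

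Define $U(n)$ as the main term minus the sum of the minimal leftovers, and $L(n)$ as the main term minus the sum of the maximal leftovers. Then $L(n)\le P(n)\le U(n)$. Each is a quasi-polynomial whose constituents depend on $n$ modulo $c$, $ac$ and $bc$, so each has period $\operatorname{lcm}(ac,bc)=abc$, using $\gcd(a,b)=1$. The gap is
\[
U(n)-L(n)=\frac{s(a-s)}{a}+\frac{s'(b-s')}{b}.
\]
Maximizing $s(a-s)/a$ over $s$ gives at most $a/4$, and likewise the second term is at most $b/4$. This yields $U-L\le \frac{a+b}{4}$.

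The step requiring the most care is exactly this bound in (ii), because the extremal leftover estimates have to be taken sharp. Using the crude bounds $0$ and $s$ would give a gap of order $a+b$, not $\frac{a+b}{4}$. So the key point is that the leftover residues are distinct, which lets us sandwich the leftover between the sum of the $s$ smallest and the sum of the $s$ largest values of $r/a$. I would also double-check edge cases: $n<c$, where $N=1$, and $s=0$, where the gap vanishes. Finally, I would verify that the explicit expressions are genuinely quasi-polynomials in $n$, i.e.\ polynomial in $n$ on each residue class mod $abc$. This follows by substituting $\lfloor n/c\rfloor=(n-(n\bmod c))/c$ and $q=(N-s)/a$.
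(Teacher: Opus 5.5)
Your proposal is correct and follows essentially the same route as the paper. Both arguments sum Popoviciu's formula over $z$, split each fractional-part sum into complete residue blocks of length $a$ (resp.\ $b$) contributing $\frac{a-1}{2}$ (resp.\ $\frac{b-1}{2}$), and sandwich the leftover between the sums of the smallest and largest distinct residues, giving a gap $\frac{s(a-s)}{a}+\frac{s'(b-s')}{b}\le\frac{a+b}{4}$. Your parametrization $\lfloor n/c\rfloor+1=qa+s$ matches the paper's $\lfloor n/c\rfloor=qa+r_a$ with $s\equiv r_a+1 \pmod a$, and it yields exactly the paper's $U$ and $L$: the case $r_a=a-1$, $s=0$ just moves a full block from the leftover into $q$.
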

As an application of Theorem \ref{thm:C}, we show that the {\sl generalized Frobenius number} exists for any choice of positive integers $a,b,c$ (Remark \ref{rmk:Frob number}).  

There are a number of known approximations for the number $P(n)$ of nonnegative solutions to $ax+by+cz=n$, the earliest of which dates back to Sylvester: $P(n)\approx\frac{n^2}{2abc}$. More recently, one can find upper and lower bounds in \cite{Binner2021Combinatorial}, the Thesis of Damanvir Singh Binner. Moreover, it was shown in \cite{LiCh07} that $P(n)$ is evaluated by an integer-valued quasi-polynomial. Additionally, algorithms exist that compute the number of solutions in the more general case \cite{Liso95}, \cite{Barvinok1994Algorithm}. In \cite{Beck_1999}, a closed form solution is given, but computing it is not easy, and deriving the formula requires significant work. 
\medskip

\subsection{Acknowledgments}

The authors would like to express their gratitude for the Department of Mathematics at Adams State University and their enthusiastic support throughout this project. Particular gratitude is owed to Meredith Anderson, Kazi Arefin, and Preston Keith for suggesting improvements to earlier drafts. Additionally, the authors are indebted to Violeta Vasilevska, whose prior mentorship of the second author sparked the initial ideas for this project.

\section{Preliminaries}\label{sec:prelim}

Our definitions and conventions in notation are slight modifications of those found in \cite{barvinok2008integer} and \cite{beck2015computing}. All spaces considered are subspaces of $\mathbb{R}^k$ with the standard Euclidean metric. We denote by $\mathbb{Z}_{\ge0}$ and $\mathbb{Z}_{>0}$ the nonnegative and positive integers, respectively. A point with integer-valued coordinates is called an {\it integral point}. In other words, the point $(x_1,\ldots,x_k)$ is integral if $x_i\in\mathbb{Z}$ for all $i$. 

Vectors will be denoted using arrows, such as $\vec{v}$. When writing vectors coordinate-wise, we will at times use angled brackets to distinguish them from points, and at other times denote (position) vectors by their terminal points when the distinction is unnecessary. Given a vector space $V\subset\mathbb{R}^k$, we write $dim(V)=d$ if $V$ contains a set of $d$ linearly independent vectors but every set of $d+1$ vectors in $V$ is linearly dependent. Given vectors $\vec{u}$ and $\vec{v}$ in $\mathbb{R}^k$, we will write $\vec{u}\cdot\vec{v}$ for their standard inner product. When $k=3$, we write $\vec{u}\times\vec{v}$ to mean the cross product that one learns in a standard calculus course.

\begin{figure}[htp]
    \centering
    \begin{tikzpicture}[x=0.6pt,y=0.45pt,yscale=-1,xscale=1] 
    \draw    (248.46,176.48) -- (435.38,197.81) ;
    \draw [shift={(438.36,198.15)}, rotate = 186.51] [fill={rgb, 255:red, 0; green, 0; blue, 0 }  ][line width=0.08]  [draw opacity=0] (8.93,-4.29) -- (0,0) -- (8.93,4.29) -- cycle    ;
    \draw    (58.56,154.81) -- (58.77,52.54) ;
    \draw [shift={(58.78,49.54)}, rotate = 90.12] [fill={rgb, 255:red, 0; green, 0; blue, 0 }  ][line width=0.08]  [draw opacity=0] (8.93,-4.29) -- (0,0) -- (8.93,4.29) -- cycle    ;
    \draw    (58.56,154.81) -- (191.88,49.26) ;
    \draw [shift={(194.24,47.39)}, rotate = 141.63] [fill={rgb, 255:red, 0; green, 0; blue, 0 }  ][line width=0.08]  [draw opacity=0] (8.93,-4.29) -- (0,0) -- (8.93,4.29) -- cycle    ;
    \draw [color={rgb, 255:red, 74; green, 144; blue, 226 }  ,draw opacity=1 ][line width=1.5]    (143.3,196.08) -- (416.26,141.6) ;
    \draw [color={rgb, 255:red, 74; green, 144; blue, 226 }  ,draw opacity=1 ][line width=1.5]    (143.3,196.08) -- (188.75,115.62) ;
    \draw [color={rgb, 255:red, 74; green, 144; blue, 226 }  ,draw opacity=1 ][line width=1.5]    (416.26,141.6) -- (461.7,61.15) ;
    \draw [color={rgb, 255:red, 74; green, 144; blue, 226 }  ,draw opacity=1 ][line width=1.5]    (188.75,115.62) -- (461.7,61.15) ;
    \draw  [dash pattern={on 0.84pt off 2.51pt}]  (248.46,176.48) -- (124.27,162.24) ;
    \draw   (436.06,11.75) -- (487.33,110.54) -- (441.89,191) -- (390.62,92.2) -- cycle ;
    \draw    (124.27,162.24) -- (58.56,154.81) ;
    \draw   (167.1,74.41) -- (218.37,173.21) -- (172.93,253.67) -- (121.66,154.87) -- cycle ;
    \draw    (457.15,122.7) .. controls (498.63,117.04) and (523.31,93.83) .. (525.36,54.54) ;
    \draw [shift={(457.15,122.7)}, rotate = 352.24] [color={rgb, 255:red, 0; green, 0; blue, 0 }  ][fill={rgb, 255:red, 0; green, 0; blue, 0 }  ][line width=0.75]      (0, 0) circle [x radius= 3.35, y radius= 3.35]   ;
    \draw    (174.36,208.42) .. controls (200.41,208.13) and (247.71,202.77) .. (251.82,222.41) ;
    \draw [shift={(174.36,208.42)}, rotate = 359.35] [color={rgb, 255:red, 0; green, 0; blue, 0 }  ][fill={rgb, 255:red, 0; green, 0; blue, 0 }  ][line width=0.75]      (0, 0) circle [x radius= 3.35, y radius= 3.35]   ;
    \draw (438.36,198.32) node [anchor=north] [inner sep=0.75pt]   [align=left] {$\displaystyle x$};
    \draw (196.44,45.68) node [anchor=south west] [inner sep=0.75pt]   [align=left] {$\displaystyle y$};
    \draw (58.78,49.37) node [anchor=south] [inner sep=0.75pt]   [align=left] {$\displaystyle z$};
    \draw (293.6,124.42) node [anchor=north] [inner sep=0.75pt]  [color={rgb, 255:red, 74; green, 144; blue, 226 }  ,opacity=1 ,rotate=-347.22] [align=left] {$\displaystyle ax+by+cz=d$};
    \draw (535,24.34) node [anchor=north] [inner sep=0.75pt]  [color={rgb, 255:red, 0; green, 0; blue, 0 }  ,opacity=1 ,rotate=-359.25] [align=left] {$\displaystyle a_{1} x+b_{1} y+c_{1} z=-d'_{1}$};
    \draw (275.86,228.83) node [anchor=north] [inner sep=0.75pt]  [color={rgb, 255:red, 0; green, 0; blue, 0 }  ,opacity=1 ,rotate=-359.25] [align=left] {$\displaystyle a_{1} x+b_{1} y+c_{1} z=d_{1}$};
    \draw (163.76,154.13) node [anchor=south east] [inner sep=0.75pt]  [font=\normalsize,color={rgb, 255:red, 74; green, 144; blue, 226 }  ,opacity=1 ] [align=left] {$\displaystyle e_{1}$};
    \draw (436.71,99.66) node [anchor=south east] [inner sep=0.75pt]  [font=\normalsize,color={rgb, 255:red, 74; green, 144; blue, 226 }  ,opacity=1 ] [align=left] {$\displaystyle e'_{1}$};
    \end{tikzpicture}
    \caption{A parallelogram in $\mathbb{R}^3$.}\label{fig:parallelogram}
\end{figure}
A {\it polyhedron} $P\subset V$ is defined by finitely many linear inequalities:
\[
    P=\left\{(x_1,\ldots,x_k)\in V\;\;|\;\;a_{i1}x_1+\cdots+a_{ik}x_k\le d_i, \;\;i\in I\right\}
\]
for some (possibly empty) finite set $I$, where $a_{ij}$ and $d_i$ are real numbers for all $i\in I$ and $1\le j\le k$. The {\it boundary} of $P$, denoted $\partial P$, is the set of all points $(x_1,\ldots,x_k)$ where equality holds for some $i\in I$. For example, a parallelogram $P$ in $\mathbb{R}^3$ is described by six inequalities. Two are needed to identify the plane containing $P$, and four to identify the sides of $P$. If the plane containing $P$ has equation $ax+by+cz=d$, then the six defining inequalities for $P$ have forms
\begin{align*}
        ax+by+cz\le d \hspace{1.4cm}&\hspace{1.4cm}-ax-by-cz\le -d \\ 
         a_1x+b_1y+c_1z\le d_1 \hspace{1cm}&\hspace{1cm}-a_1x-b_1y-c_1z\le d_1' \\
          a_2x+b_2y+c_2z\le d_2 \hspace{1cm}&\hspace{1cm}-a_2x-b_2y-c_2z\le d_2',
\end{align*}
where, for $i\in\{1,2\}$, $e_i$ and $e_i'$ denote a pair of opposite edges of $P$, and $a_ix+b_iy+c_iz=d_i$ is the equation of a plane containing $e_i$ that is orthogonal to $P$. See Figure \ref{fig:parallelogram}.
\medskip

We  will assume all polyhedra to be bounded and simple (without holes) unless stated otherwise. A polyhedron, all of whose vertices are integral points, is called {\it integral}. A 2-dimensional polyhedron is also called a {\it polygon}. For a general polyhedron $P$, we denote by $Vol(P)$ the $n$-dimensional Euclidean volume of $P$. If $P$ is a polygon, we often write $Area(P)$ instead.
\medskip

A primary goal throughout is to determine the number of integral points contained on polyhedra. Given a vector space $V\subseteq\mathbb{R}^k$, and a polyhedron $P\subset V$, we write $|P\cap\mathbb{Z}^k|$ to denote the number of integral points contained on $P$. Similarly, $|\partial P\cap\mathbb{Z}^k|$ denotes the number of integral points contained on the boundary of $P$. It will also be necessary to consider the points in $P$ not on $\partial P$. We define the {\it interior} of $P$ as $\overset{\circ}{P}=P\setminus\partial P$. With this notation, the number of integral points in the interior of $P$ can be expressed as $\left|\overset{\circ}{P}\cap\mathbb{Z}^k\right|$.
\medskip

We recall a couple of facts particular to the case $k=3$ and $dim(V)=2$. Let $\vec{u}=\langle u_1,u_2,u_3\rangle$ and $\vec{v}=\langle v_1,v_2,v_3\rangle$ be linearly independent vectors in $\mathbb{R}^3$. The equation of the plane spanned by $\vec{u}$ and $\vec{v}$ is found by first computing the cross product $\vec{u}\times\vec{v}$, then setting 
\[
    \vec{u}\times\vec{v}\,\cdot\,(\vec{x}-\vec{p})=0,
\]
where $\vec{x}=\langle x,y,z\rangle$, and $\vec{p}$ is any position vector whose terminal point is on the plane. The area of the parallelogram $P$ with sides $\vec{u}$ and $\vec{v}$ is computed as $Area(P)=|\vec{u}\times\vec{v}|$, the determinant of their cross product.

\subsection{Lattices and Polygons in \texorpdfstring{$\mathbb{R}^3$}{}}

For a general treatment of lattices in $\mathbb{R}^k$ and integral points in polyhedra, we refer the reader to \cite{barvinok2008integer}.
\begin{definition}
    Let $V\subset\mathbb{R}^k$ be a vector space. A subset $\Lambda\subset V$ is called a {\it lattice} if 
    \begin{enumerate}[label={\roman*.}\;]
         \item $\Lambda$ is an additive subgroup of $V$: for any $\vec{u},\vec{v}\in\Lambda$, $\vec{u}\pm\vec{v}\in\Lambda$;
         \item $\Lambda$ is {\it discrete}: for any bounded subset $B\subset V$, $B\cap\Lambda$ is finite; and
         \item $\Lambda$ spans $V$.
     \end{enumerate}
\end{definition}
The {\it rank} of a lattice $\Lambda$ is the dimension of $V$. If the rank of $\Lambda$ is $k$, we call $\Lambda$ a {\it full rank} lattice. We refer to $\mathbb{Z}^k$ as the {\it standard full rank lattice} in $\mathbb{R}^k$. Let $dim(V)=d$. A {\it basis} for $\Lambda$ is a collection of $d$ vectors, $\vec{u}_1,\ldots,\vec{u}_d$, such that every point $\vec{u}\in\Lambda$ admits a unique representation
\[
    \vec{u}=x_1\vec{u}_1+\cdots+x_d\vec{u}_d,
\]
where $x_i\in\mathbb{Z}$ for $i=1,\ldots,d$. That bases exist for lattices is proved by ~\cite[Theorem 10.4]{barvinok2008integer}. Note that uniqueness of representations implies $\vec{u}_1,\ldots,\vec{u}_d$ are linearly independent vectors. However, not all collections of $d$ linearly independent vectors in $\Lambda$ form a basis.

A {\it sublattice} of $\Lambda$ is obtained by taking the spanning set of a collection of linearly independent vectors in $\Lambda$. We emphasize that a sublattice is itself a lattice for which the spanning vectors give a basis.
\medskip

For the remainder, we will focus exclusively on {\it integer lattices}, lattices of the form $\Lambda=V\cap\mathbb{Z}^k$, where $dim(V)\le k$. Such lattices are {\it maximal}, meaning there does not exist another integer lattice in $V$ containing $\Lambda$ as a proper sublattice. Our main source of examples arise when $V$ is a plane through the origin in $\mathbb{R}^3$, and $\Lambda=V\cap\mathbb{Z}^3$ (see Figure \ref{fig:a rank 2 lattice}). We first record some of the key definitions and facts concerning lattices without any restriction on their rank. 
\medskip

\begin{definition}
    Given a lattice $\Lambda\subset V$ with basis vectors $\vec{u}_1,\ldots,\vec{u}_d$, a {\it fundamental parallelepiped} of $\Lambda$ has the form
    \[
        \Pi=\left\{\sum_{i=1}^dx_i\vec{u}_i\;\left|\;0\le x_i\le1 \;\; {\rm for} \;\; i=1,\ldots,d\right.\right\}.
    \]
    The {\it determinant} of $\Lambda$, denoted $det(\Lambda)$, is defined to be the volume of $\Pi$, that is, $det(\Lambda)=Vol(\Pi)$.
\end{definition}
That $det(\Lambda)$ is well-defined follows from ~\cite[Theorem 10.8]{barvinok2008integer}.
Note that the interior of a fundamental parallelepiped $\Pi$ has empty intersection with $\Lambda$. Indeed, suppose $\vec{n} \in\overset{\circ}{\Pi}\cap\Lambda$. Then $\vec{n}$ admits a unique decomposition $\vec{n}=x_1\vec{u}_1+\cdots x_d\vec{u}_d$, where $\vec{u}_1,\ldots,\vec{u}_d$ is a basis for $\Lambda$, and $x_i\in\mathbb{Z}$ for $i=1,\ldots,d$. Since $\vec{n}\in\Pi$, it follows that $x_i$ is either 0 or 1 for $i=1,\ldots,d$. But then $\vec{n}\in\partial\Pi$, a contradiction since $\partial\Pi\cap\overset{\circ}{\Pi}=\emptyset$. Thus $|\overset{\circ}{\Pi}\cap\Lambda|=0$.

Given a sublattice $\Lambda'\subseteq\Lambda$, the {\it index} of $\Lambda'$ in $\Lambda$ is denoted by $[\Lambda':\Lambda]$ and defined by
\[
    [\Lambda':\Lambda]=\frac{det(\Lambda')}{det(\Lambda)}.   
\]
Let $\vec{v}_i$ be a collection of $d$ vectors in $\Lambda$, and define the {\it half-open parallelepiped} $\Pi_{1/2}$ by 
$$\Pi_{1/2} = \left\{ \sum_{i=1}^d x_i \vec{v}_i \mid 0 \le x_i < 1 \text{ for } i=1, \dots, d \right\}.$$ 
Note that the collection $\vec{v}_1,\ldots,\vec{v}_d$ spans a sublattice $\Lambda'$ of $\Lambda$ for which $\Pi$ is a fundamental parallelepiped. We record the following facts whose derivations can be found in section 1.2 of \cite{cassels1997introduction}. We will use these facts to establish a criterion for when a parallelepiped in $\Lambda$ is fundamental. 
\begin{enumerate}[label={\roman*.}\;]
    \item $|\Pi_{1/2}\cap\Lambda|=[\Lambda':\Lambda]$; and
    \item $\Lambda\subseteq\Lambda'\subseteq[\Lambda':\Lambda]\Lambda$.
\end{enumerate}
First note that $\Pi$ is fundamental precisely when the only points in $\Pi\cap\Lambda$ are the vertices of $\Pi$. In this case,  $\Pi_{1/2}\cap\Lambda$ contains one point, the origin. Supposing this is the only point in $\Pi_{1/2}\cap\Lambda$, it follows that $det(\Lambda)=det(\Lambda')$ by fact i. Then, by fact ii, it follows that $\Lambda=\Lambda'$. We thus arrive at the following criterion for when a parallelepiped in $\Lambda$ is fundamental.
\begin{lemma}\label{lem:simple is fundamental}
    Let $\Pi$ be a parallelepiped in $\Lambda$ that intersects $\Lambda$ only in the vertices of $\Pi$. Then $\Pi$ is a fundamental parallelepiped for $\Lambda$. 
\end{lemma}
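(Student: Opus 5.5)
Let $\vec{v}_1,\ldots,\vec{v}_d\in\Lambda$ be the edge vectors of $\Pi$ emanating from a vertex, which we translate to the origin; this is harmless because $\Lambda$ is an additive group, so translating by a lattice vertex preserves $\Pi\cap\Lambda$. These vectors are linearly independent, since $\Pi$ is a genuine parallelepiped in the $d$-dimensional space $V$. Let $\Lambda'$ be the sublattice they span, so $\Pi$ is a fundamental parallelepiped of $\Lambda'$. The plan is to show $|\Pi_{1/2}\cap\Lambda|=1$, then use fact i to get $[\Lambda':\Lambda]=1$, and finally fact ii to get $\Lambda'=\Lambda$.

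\textbf{Step 1: count $\Pi_{1/2}\cap\Lambda$.} Since $\Pi_{1/2}\subseteq\Pi$, the hypothesis says every lattice point of $\Pi_{1/2}$ is a vertex of $\Pi$. The vertices of $\Pi$ are the points $\sum_i \epsilon_i\vec{v}_i$ with $\epsilon_i\in\{0,1\}$. By linear independence the coefficients of such a point are unique, and membership in $\Pi_{1/2}$ forces every $\epsilon_i<1$. Hence the only vertex lying in $\Pi_{1/2}$ is the origin, and $|\Pi_{1/2}\cap\Lambda|=1$.

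\textbf{Step 2: compute the index.} Fact i now gives $[\Lambda':\Lambda]=1$, that is, $det(\Lambda')=det(\Lambda)$.

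\textbf{Step 3: conclude $\Lambda'=\Lambda$.} Substituting $[\Lambda':\Lambda]=1$ into fact ii gives $\Lambda\subseteq\Lambda'\subseteq 1\cdot\Lambda=\Lambda$, so $\Lambda'=\Lambda$. Since $\Lambda'$ is a sublattice of $\Lambda$, the containment $\Lambda'\subseteq\Lambda$ also holds directly. Therefore $\vec{v}_1,\ldots,\vec{v}_d$ is a basis of $\Lambda$, and $\Pi$ is a fundamental parallelepiped for $\Lambda$.

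\textbf{Main obstacle.} The step needing the most care is Step 3, because fact ii as recorded is stated loosely. Read literally, it puts $\Lambda$ inside $\Lambda'$, whereas the standard statement is $[\Lambda':\Lambda]\,\Lambda\subseteq\Lambda'\subseteq\Lambda$. In the case we need the two readings agree, since the index equals $1$. To avoid depending on the wording, I would give a direct argument that $\Lambda\subseteq\Lambda'$. Take any $\vec{u}\in\Lambda$ and write $\vec{u}=\sum_i x_i\vec{v}_i$ with real coefficients $x_i$. Subtracting $\sum_i\lfloor x_i\rfloor\vec{v}_i\in\Lambda'\subseteq\Lambda$ gives a point $\sum_i\{x_i\}\vec{v}_i$, where $\{x_i\}$ is the fractional part. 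This point is in $\Lambda$ and lies in $\Pi_{1/2}$, so by Step 1 it is the origin. Hence every $x_i$ is an integer and $\vec{u}\in\Lambda'$. This direct argument bypasses facts i and ii entirely, so I would present it as the main proof, or at least as a check on Step 3.
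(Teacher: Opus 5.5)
Your proposal is correct. Steps 1--3 are exactly the paper's argument: the hypothesis leaves only the origin in $\Pi_{1/2}\cap\Lambda$, fact i then gives $det(\Lambda')=det(\Lambda)$, and fact ii gives $\Lambda'=\Lambda$.

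You are also right that fact ii is recorded backwards in the paper. The correct containment is $[\Lambda':\Lambda]\,\Lambda\subseteq\Lambda'\subseteq\Lambda$, in the paper's index notation. This is harmless here because the index is $1$.

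Your direct argument is a genuinely more self-contained route to the same conclusion. You take an arbitrary $\vec{u}\in\Lambda$ and subtract $\sum_i\lfloor x_i\rfloor\vec{v}_i\in\Lambda'$, which lands the result in $\Pi_{1/2}$. This uses only that $\Lambda$ is an additive group and that the $\vec{v}_i$ form a vector-space basis of $V$. It avoids both the determinant/index machinery cited from Cassels and the misstated fact ii.

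In effect, your argument is the proof of fact i specialized to index one: $\Pi_{1/2}$ meets every coset of $\Lambda'$ in $\Lambda$. The paper's version is shorter only because it outsources that step to the reference. Presenting the direct argument as the main proof, as you suggest, is the cleaner choice given the flawed statement of fact ii.
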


We return to the case $k=3$ and $dim(V)=2$. In this case, the vector space $V$ spanned by $\Lambda$ is a plane with an equation of the form $ax+by+cz=0$, for $a,b,c\in\mathbb{R}$. See Figure \ref{fig:a rank 2 lattice}.
\begin{figure}[htp]
    \centering
    \begin{tikzpicture}[x=0.5pt,y=0.45pt,yscale=-1,xscale=1]
    \draw    (70.25,184.43) -- (219.97,206.56) ;
    \draw [shift={(222.94,207)}, rotate = 188.41] [fill={rgb, 255:red, 0; green, 0; blue, 0 }  ][line width=0.08]  [draw opacity=0] (8.93,-4.29) -- (0,0) -- (8.93,4.29) -- cycle    ;
    \draw    (70.25,184.43) -- (69.05,107.31) ;
    \draw [shift={(69,104.31)}, rotate = 89.11] [fill={rgb, 255:red, 0; green, 0; blue, 0 }  ][line width=0.08]  [draw opacity=0] (8.93,-4.29) -- (0,0) -- (8.93,4.29) -- cycle    ;
    \draw    (70.25,184.43) -- (166.3,138.75) ;
    \draw [shift={(169.01,137.46)}, rotate = 154.57] [fill={rgb, 255:red, 0; green, 0; blue, 0 }  ][line width=0.08]  [draw opacity=0] (8.93,-4.29) -- (0,0) -- (8.93,4.29) -- cycle    ;
    \draw [color={rgb, 255:red, 74; green, 144; blue, 226 }  ,draw opacity=1 ][line width=1.5]    (70.25,184.43) -- (218.26,170.76) ;
    \draw [shift={(222.24,170.39)}, rotate = 174.72] [fill={rgb, 255:red, 74; green, 144; blue, 226 }  ,fill opacity=1 ][line width=0.08]  [draw opacity=0] (13.4,-6.43) -- (0,0) -- (13.4,6.44) -- (8.9,0) -- cycle    ;
    \draw [color={rgb, 255:red, 74; green, 144; blue, 226 }  ,draw opacity=1 ][line width=1.5]    (70.25,184.43) -- (136.8,135.34) ;
    \draw [shift={(140.01,132.96)}, rotate = 143.59] [fill={rgb, 255:red, 74; green, 144; blue, 226 }  ,fill opacity=1 ][line width=0.08]  [draw opacity=0] (13.4,-6.43) -- (0,0) -- (13.4,6.44) -- (8.9,0) -- cycle    ;
    \draw [color={rgb, 255:red, 74; green, 144; blue, 226 }  ,draw opacity=1 ][line width=1.5]    (140.01,132.96) -- (288.03,119.29) ;
    \draw [shift={(292.01,118.93)}, rotate = 174.72] [fill={rgb, 255:red, 74; green, 144; blue, 226 }  ,fill opacity=1 ][line width=0.08]  [draw opacity=0] (13.4,-6.43) -- (0,0) -- (13.4,6.44) -- (8.9,0) -- cycle    ;
    \draw [color={rgb, 255:red, 74; green, 144; blue, 226 }  ,draw opacity=1 ][line width=1.5]  [dash pattern={on 1.69pt off 2.76pt}]  (222.24,170.39) -- (370.25,156.72) ;
    \draw [shift={(374.24,156.35)}, rotate = 174.72] [fill={rgb, 255:red, 74; green, 144; blue, 226 }  ,fill opacity=1 ][line width=0.08]  [draw opacity=0] (13.4,-6.43) -- (0,0) -- (13.4,6.44) -- (8.9,0) -- cycle    ;
    \draw [color={rgb, 255:red, 74; green, 144; blue, 226 }  ,draw opacity=1 ][line width=1.5]  [dash pattern={on 1.69pt off 2.76pt}]  (292.01,118.93) -- (440.02,105.26) ;
    \draw [shift={(444,104.89)}, rotate = 174.72] [fill={rgb, 255:red, 74; green, 144; blue, 226 }  ,fill opacity=1 ][line width=0.08]  [draw opacity=0] (13.4,-6.43) -- (0,0) -- (13.4,6.44) -- (8.9,0) -- cycle    ;
    \draw [color={rgb, 255:red, 74; green, 144; blue, 226 }  ,draw opacity=1 ][line width=1.5]  [dash pattern={on 1.69pt off 2.76pt}]  (374.24,156.35) -- (522.25,142.69) ;
    \draw [shift={(526.23,142.32)}, rotate = 174.72] [fill={rgb, 255:red, 74; green, 144; blue, 226 }  ,fill opacity=1 ][line width=0.08]  [draw opacity=0] (13.4,-6.43) -- (0,0) -- (13.4,6.44) -- (8.9,0) -- cycle    ;
    \draw [color={rgb, 255:red, 74; green, 144; blue, 226 }  ,draw opacity=1 ][line width=1.5]    (222.24,170.39) -- (288.79,121.3) ;
    \draw [shift={(292.01,118.93)}, rotate = 143.59] [fill={rgb, 255:red, 74; green, 144; blue, 226 }  ,fill opacity=1 ][line width=0.08]  [draw opacity=0] (13.4,-6.43) -- (0,0) -- (13.4,6.44) -- (8.9,0) -- cycle    ;
    \draw [color={rgb, 255:red, 74; green, 144; blue, 226 }  ,draw opacity=1 ][line width=1.5]  [dash pattern={on 1.69pt off 2.76pt}]  (374.24,156.35) -- (440.79,107.27) ;
    \draw [shift={(444,104.89)}, rotate = 143.59] [fill={rgb, 255:red, 74; green, 144; blue, 226 }  ,fill opacity=1 ][line width=0.08]  [draw opacity=0] (13.4,-6.43) -- (0,0) -- (13.4,6.44) -- (8.9,0) -- cycle    ;
    \draw [color={rgb, 255:red, 74; green, 144; blue, 226 }  ,draw opacity=1 ][line width=1.5]  [dash pattern={on 1.69pt off 2.76pt}]  (140.01,132.96) -- (206.56,83.87) ;
    \draw [shift={(209.78,81.5)}, rotate = 143.59] [fill={rgb, 255:red, 74; green, 144; blue, 226 }  ,fill opacity=1 ][line width=0.08]  [draw opacity=0] (13.4,-6.43) -- (0,0) -- (13.4,6.44) -- (8.9,0) -- cycle    ;
    \draw [color={rgb, 255:red, 74; green, 144; blue, 226 }  ,draw opacity=1 ][line width=1.5]  [dash pattern={on 1.69pt off 2.76pt}]  (292.01,118.93) -- (358.56,69.84) ;
    \draw [shift={(361.78,67.46)}, rotate = 143.59] [fill={rgb, 255:red, 74; green, 144; blue, 226 }  ,fill opacity=1 ][line width=0.08]  [draw opacity=0] (13.4,-6.43) -- (0,0) -- (13.4,6.44) -- (8.9,0) -- cycle    ;
    \draw [color={rgb, 255:red, 74; green, 144; blue, 226 }  ,draw opacity=1 ][line width=1.5]  [dash pattern={on 1.69pt off 2.76pt}]  (209.78,81.5) -- (357.79,67.83) ;
    \draw [shift={(361.78,67.46)}, rotate = 174.72] [fill={rgb, 255:red, 74; green, 144; blue, 226 }  ,fill opacity=1 ][line width=0.08]  [draw opacity=0] (13.4,-6.43) -- (0,0) -- (13.4,6.44) -- (8.9,0) -- cycle    ;
    \draw [color={rgb, 255:red, 74; green, 144; blue, 226 }  ,draw opacity=1 ][line width=1.5]  [dash pattern={on 1.69pt off 2.76pt}]  (361.78,67.46) -- (509.79,53.8) ;
    \draw [shift={(513.77,53.43)}, rotate = 174.72] [fill={rgb, 255:red, 74; green, 144; blue, 226 }  ,fill opacity=1 ][line width=0.08]  [draw opacity=0] (13.4,-6.43) -- (0,0) -- (13.4,6.44) -- (8.9,0) -- cycle    ;
    \draw [color={rgb, 255:red, 74; green, 144; blue, 226 }  ,draw opacity=1 ][line width=1.5]  [dash pattern={on 1.69pt off 2.76pt}]  (444,104.89) -- (510.55,55.8) ;
    \draw [shift={(513.77,53.43)}, rotate = 143.59] [fill={rgb, 255:red, 74; green, 144; blue, 226 }  ,fill opacity=1 ][line width=0.08]  [draw opacity=0] (13.4,-6.43) -- (0,0) -- (13.4,6.44) -- (8.9,0) -- cycle    ;
    \draw [color={rgb, 255:red, 74; green, 144; blue, 226 }  ,draw opacity=1 ][line width=1.5]  [dash pattern={on 1.69pt off 2.76pt}]  (526.23,142.32) -- (592.78,93.23) ;
    \draw [shift={(596,90.86)}, rotate = 143.59] [fill={rgb, 255:red, 74; green, 144; blue, 226 }  ,fill opacity=1 ][line width=0.08]  [draw opacity=0] (13.4,-6.43) -- (0,0) -- (13.4,6.44) -- (8.9,0) -- cycle    ;
    \draw [color={rgb, 255:red, 74; green, 144; blue, 226 }  ,draw opacity=1 ][line width=1.5]  [dash pattern={on 1.69pt off 2.76pt}]  (444,104.89) -- (592.02,91.22) ;
    \draw [shift={(596,90.86)}, rotate = 174.72] [fill={rgb, 255:red, 74; green, 144; blue, 226 }  ,fill opacity=1 ][line width=0.08]  [draw opacity=0] (13.4,-6.43) -- (0,0) -- (13.4,6.44) -- (8.9,0) -- cycle    ;
    \draw [color={rgb, 255:red, 74; green, 144; blue, 226 }  ,draw opacity=1 ][line width=1.5]  [dash pattern={on 1.69pt off 2.76pt}]  (209.78,81.5) -- (276.33,32.41) ;
    \draw [shift={(279.55,30.04)}, rotate = 143.59] [fill={rgb, 255:red, 74; green, 144; blue, 226 }  ,fill opacity=1 ][line width=0.08]  [draw opacity=0] (13.4,-6.43) -- (0,0) -- (13.4,6.44) -- (8.9,0) -- cycle    ;
    \draw [color={rgb, 255:red, 74; green, 144; blue, 226 }  ,draw opacity=1 ][line width=1.5]  [dash pattern={on 1.69pt off 2.76pt}]  (361.78,67.46) -- (428.33,18.37) ;
    \draw [shift={(431.55,16)}, rotate = 143.59] [fill={rgb, 255:red, 74; green, 144; blue, 226 }  ,fill opacity=1 ][line width=0.08]  [draw opacity=0] (13.4,-6.43) -- (0,0) -- (13.4,6.44) -- (8.9,0) -- cycle    ;
    \draw [color={rgb, 255:red, 74; green, 144; blue, 226 }  ,draw opacity=1 ][line width=1.5]  [dash pattern={on 1.69pt off 2.76pt}]  (279.55,30.04) -- (427.56,16.37) ;
    \draw [shift={(431.55,16)}, rotate = 174.72] [fill={rgb, 255:red, 74; green, 144; blue, 226 }  ,fill opacity=1 ][line width=0.08]  [draw opacity=0] (13.4,-6.43) -- (0,0) -- (13.4,6.44) -- (8.9,0) -- cycle    ;
    \draw (172.32,139.37) node [anchor=north west][inner sep=0.75pt]  [color={rgb, 255:red, 74; green, 144; blue, 226 }  ,opacity=1 ] [align=left] {$\displaystyle\bf \Pi $};
    \end{tikzpicture}
    \caption{A rank-2 lattice $\Lambda$ and its fundamental parallelepiped $\Pi$.}
    \label{fig:a rank 2 lattice}
\end{figure}
\noindent Dividing the fundamental parallelogram for $\Lambda$ into halves along a diagonal yields two congruent triangles, the interiors of which have empty intersection with $\Lambda$. This warrants the following definition. 
\begin{definition}[Special Triangle, Triangulation]
    A {\it special triangle} in $\Lambda$ is a non-degenerate triangle $T$ whose vertices are integral points and such that $|\overset{\circ}{T}\cap\Lambda|=0$.
    A {\it triangulation} of a polygon $P\subset V$ is a planar graph $\Delta$ embedded in $V$ such that
    \begin{enumerate}[label={\roman*.}\;]
        \item $\Delta\subset P$;
        \item $\partial P\subset\Delta$;
        \item $P\setminus\Delta$ is a union of disjoint interiors of triangles.
    \end{enumerate}
    We call each complimentary component of $P\setminus\Delta$ a {\it face} of $\Delta$. A triangulation is called {\it special} if each face of $\Delta$ is the interior of a special triangle. Note that a special triangulation $\Delta$ of $P$ satisfies $(P\setminus\Delta)\cap\mathbb{Z}^k=\emptyset$.
\end{definition}

Clearly, the special triangles obtained by dividing a fundamental parallelogram along a diagonal each have area $\frac{det(\Lambda)}{2}$. We claim that all special triangles in $\Lambda$ have this property.
\begin{lemma}
    Given a rank-2 integer lattice $\Lambda$ in $\mathbb{R}^3$, let $T$ be a special triangle in $\Lambda$. Then 
    \[
        Area(T)=\frac{det(\Lambda)}{2}.
    \]
\end{lemma}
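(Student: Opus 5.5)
Let $T$ have vertices $\vec{p}$, $\vec{p}+\vec{u}$, $\vec{p}+\vec{v}$ with $\vec{p}\in\Lambda$ (translating by $-\vec{p}$ we may take $\vec{p}=\vec{0}$, since translation by a lattice vector preserves $\Lambda$, areas, and the property of being special). The plan is to complete $T$ to the parallelogram $\Pi$ with sides $\vec{u}$ and $\vec{v}$ and apply Lemma \ref{lem:simple is fundamental}. Then $Area(T)=\tfrac12|\vec{u}\times\vec{v}|=\tfrac12 Area(\Pi)=\tfrac12 det(\Lambda)$ once we know $\Pi$ is fundamental. Note that $\vec{u},\vec{v}$ are linearly independent because $T$ is non-degenerate, and they lie in $\Lambda$ since the vertices of $T$ are integral points of $V$.

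The key step is to show $\Pi\cap\Lambda$ consists only of the four vertices $\vec{0},\vec{u},\vec{v},\vec{u}+\vec{v}$. First I would establish that the closed triangle $T$ meets $\Lambda$ only at its vertices. The interior is lattice-free by hypothesis, but the definition of special triangle says nothing about points on the edges. So I would handle the edges as follows. Suppose $\vec{w}\in\Lambda$ lies in the relative interior of an edge, say $\vec{w}=t\vec{u}$ with $0<t<1$. Take a lattice point strictly inside $T$ to get a contradiction: the point $\vec{w}+\epsilon$ combination is not integral in general, so this naive approach fails.

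Instead I would argue by minimality. Among all lattice points in $T$ other than $\vec{0}$, choose a suitable one and subdivide $T$ into smaller lattice triangles, which reduces the problem to counting. Alternatively, and more cleanly, I would note the following. If $\vec{w}=t\vec{u}\in\Lambda$ with $0<t<1$, consider the point $\vec{v}+\vec{w}-\vec{u}$, or the reflection $\vec{u}+\vec{v}-\vec{w}$. This second point equals $(1-t)\vec{u}+\vec{v}$, which lies in $\Pi$ but outside $T$, so it does not help either.

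I expect this edge issue to be the main obstacle. Indeed, the lemma as literally stated appears false if edges may contain lattice points. For example, the triangle with vertices $0$, $(2,0,0)$, $(0,1,0)$ in the plane $z=0$ has no interior lattice points but area $1\neq\tfrac12$. So I would read ``special'' as requiring $T\cap\Lambda$ to be exactly the three vertices, consistent with the triangles arising from halving a fundamental parallelogram and with special triangulations, where edge points would be vertices of $\Delta$. I would state this interpretation explicitly.

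Under this reading, a point $\vec{w}=s\vec{u}+t\vec{v}\in\Pi\cap\Lambda$ either lies in $T$ (when $s+t\le1$), and is then a vertex, or satisfies $s+t\ge1$. In the second case $\vec{u}+\vec{v}-\vec{w}=(1-s)\vec{u}+(1-t)\vec{v}$ is a lattice point of $T$, hence a vertex of $T$, so $\vec{w}$ is one of $\vec{u}+\vec{v}$, $\vec{v}$, $\vec{u}$. Thus $\Pi$ meets $\Lambda$ only in its vertices, and Lemma \ref{lem:simple is fundamental} makes $\Pi$ fundamental. This gives $Area(\Pi)=det(\Lambda)$ and hence the claim.
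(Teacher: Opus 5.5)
Your argument is correct under your reading of ``special,'' and it is essentially the paper's proof: complete $T$ to a parallelogram, use the point reflection through its center (which preserves $\Lambda$) to move any lattice point of the other half into $T$, and conclude with Lemma \ref{lem:simple is fundamental}; the only difference is that the paper uses the parallelogram on $\vec{u}-\vec{v}$ and $\vec{v}$ (with diagonal $\vec{u}$) instead of the one on $\vec{u}$ and $\vec{v}$. Your counterexample with vertices $0$, $(2,0,0)$, $(0,1,0)$ is valid, so the hypothesis really must be that $T\cap\Lambda$ is exactly the three vertices --- the paper assumes this silently when it writes ``Clearly $|\partial P\cap\mathbb{Z}^3|=4$'' and when it ignores lattice points on the diagonal --- and in a write-up you should delete the abandoned false starts in your second and third paragraphs.
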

\begin{proof}
    Assume that one of the vertices of $T$ is the origin. Let $\vec{u}$, $\vec{v}$, and $\vec{u}-\vec{v}$ denote sides of $T$ written as position vectors. We will show that the parallelogram $P$ determined by $\vec{u}-\vec{v}$ and $\vec{v}$ satisfies $|P\cap\mathbb{Z}^3|=4$ and is therefore a fundamental parallelogram for $\Lambda$ by Lemma \ref{lem:simple is fundamental}. Clearly $|\partial P\cap\mathbb{Z}^3|=4$. Thus we only need to show that $\overset{\circ}{P}\cap\mathbb{Z}^3=\emptyset$.
    
    Now, $\vec{u}$ is a diagonal of $P$ and divides $P$ into two congruent triangles, one of which is $T$. Let us call the other triangle $T'$. Suppose $T'$ contains the lattice vector $\vec{\lambda}$ in its interior. The transformation  $\vec{w}\mapsto\vec{u}-\vec{w}$ maps $T'$ bijectively onto $T$. The image of $\vec{\lambda}$, $\vec{u}-\vec{\lambda}$ is again in $\Lambda$. But $\vec{u}-\vec{\lambda}\in\overset{\circ}{T}$ by bijectivity, a contradiction. 
\end{proof}
\noindent{\bf Euler's Formula.} Recall that Euler's formula relates the number of vertices ($v$), edges ($e$), and faces ($f$) of a triangulation $\Delta$ of a polygon living in the plane via the equation $f-e+v=1$.
\medskip

The following result is a version of Pick's Theorem adapted for rank-2 lattices in $\mathbb{R}^3$. It provides a crucial step in Proof 2 of Theorem \ref{thm:A}, and thus provides a means for constructing a special class of Ehrhart polynomials. We include a proof for completeness. Our proof utilizes Euler's formula and can be considered traditional. The technique appears in \cite{coxeter1989introduction}, and a version adapted to $\mathbb{R}^2$ can be found in \cite{AhnDouglas2025}.
\begin{lemma}\label{lem:Picks in 3D}
    Let $\Lambda$ be a rank 2 lattice in $\mathbb{R}^3$. Given a polygon $P$ in $\mathbb{R}^3$ with vertices in $\Lambda$, 
    \[
        |P\cap\mathbb{Z}^3|=\frac{Area(P)}{det(\Lambda)}+\frac{|\partial P\cap\mathbb{Z}^3|}{2}+1.
    \]
\end{lemma}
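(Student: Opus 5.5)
The plan is the classical Euler-formula argument, transplanted to the plane $V$ spanned by $\Lambda$. Throughout, the polygon is understood to lie in a translate of $V$ containing its vertices; since integral translations preserve areas and lattice counts, we translate so that $P\subset V$ and $P\cap\mathbb{Z}^3=P\cap\Lambda$. Write $N=|P\cap\mathbb{Z}^3|$, $B=|\partial P\cap\mathbb{Z}^3|$ and $I=N-B$. The statement as printed, with $+1$, seems inconsistent with the classical Pick formula $N=A+B/2+1$ (the fundamental triangle has $N=3$, $B=3$, normalized area $1/2$). I will therefore prove $N=\frac{Area(P)}{det(\Lambda)}+\frac{B}{2}+1$ with $Area(P)/det(\Lambda)$ read as the normalized area in the version that comes out of the count. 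The count itself gives $2\,Area(P)/det(\Lambda)=2I+B-2$, which is the classical identity, and I will record this and match constants at the end.

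\textbf{Step 1 (special triangulation).} Build a special triangulation $\Delta$ of $P$ whose vertex set is exactly $P\cap\Lambda$. First triangulate $P$ by diagonals from one vertex. Then refine: whenever a triangle contains a lattice point in its interior or on an edge other than at a vertex, split it at that point into three (respectively two) triangles. Each split strictly decreases the finite number of lattice points lying in a triangle but not among its vertices, so the process terminates. Every final face is then a special triangle, and every lattice point of $P$ is a vertex. Hence $v=N$.

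\textbf{Step 2 (faces from area).} By the preceding lemma, each face has area $det(\Lambda)/2$. Summing over faces gives $f=2\,Area(P)/det(\Lambda)$.

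\textbf{Step 3 (edges).} Count edge--face incidences. Each face has $3$ edges, interior edges lie on two faces and boundary edges on one, so $3f=2e-e_\partial$. Because every boundary lattice point is a vertex, the boundary edges form a cycle through exactly the $B$ boundary points, so $e_\partial=B$. Thus $e=(3f+B)/2$.

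\textbf{Step 4 (Euler).} Substitute into $f-e+v=1$:
\[
f-\tfrac{3f+B}{2}+N=1 \quad\Longrightarrow\quad N=\tfrac{f}{2}+\tfrac{B}{2}+1=\frac{Area(P)}{det(\Lambda)}+\frac{B}{2}+1 .
\]
This matches the displayed formula.

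Steps 2 through 4 are bookkeeping. The real obstacles are Step 1 and the reduction before it. For Step 1 the key facts are that a splitting point on an edge must be passed to both adjacent triangles, so that the result remains a planar graph, and that the termination measure strictly decreases. For the reduction, one must check that the polygon's vertices being in $\Lambda$ (or in a common integral translate) makes every integral point of $P$ a point of that lattice. This holds because any difference of two integral points in the plane lies in $V\cap\mathbb{Z}^3=\Lambda$.
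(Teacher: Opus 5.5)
Your proposal is correct and follows the paper's proof essentially step for step: a special triangulation, $f=2\,Area(P)/det(\Lambda)$ from the area lemma, the incidence count $3f=2e-|\partial P\cap\mathbb{Z}^3|$, and Euler's formula $f-e+v=1$. Your Step 1 supplies the construction the paper takes for granted, and splitting also at lattice points on edges is exactly what the area lemma (whose proof assumes the edges carry no extra lattice points) and the identities $v=N$, $e_\partial=B$ require. Drop the opening hedge, though: the printed formula already is the classical $N=A+B/2+1$ with $A=Area(P)/det(\Lambda)$, as your own check $\tfrac12+\tfrac32+1=3$ confirms, so there is no inconsistency to reconcile.
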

\begin{proof}
    First, we subdivide $P$ into special triangles, each having area $\frac{1}{2}det(\Lambda)$. Let $f$, $e$, and $v$ denote the numbers of faces, edges, and vertices, respectively, in $\Delta$. Then $f$ equals the number of special triangles in $P$. That is,
    \[
        f=2\frac{Area(P)}{det(\Lambda)}.
    \]
    Next, note that twice the number of special triangles in $P$ equals twice the number of edges in $\Delta$ minus the number of edges in $\partial P\cap\Delta$. In other words,
    \[
        6\frac{Area(P)}{det(\Lambda)}=2e-|\partial P\cap\mathbb{Z}^3|.
    \]
    Thus, the numbers of edges and vertices are
    \[
        e=3\frac{Area(P)}{det(\Lambda)}+\frac{|\partial P\cap\mathbb{Z}^3|}{2}  \hspace{1cm}\&\hspace {1cm}v=|P\cap\mathbb{Z}^3|,
    \]
    respectively. We can now apply Euler's Formula.
    \[
        1=f-e+v=2\frac{Area(P)}{det(\Lambda)}-3\frac{Area(P)}{det(\Lambda)}-\frac{|\partial P\cap\mathbb{Z}^3|}{2}+|P\cap\mathbb{Z}^3|
    \]
    Rearranging gives the desired result.
\end{proof}

As a special case of Lemma \ref{lem:Picks in 3D}, let $\Lambda$ be the standard lattice $\mathbb{Z}^2$ in the $xy$-plane. Then a fundamental parallelogram for $\Lambda$ is given by the unit square. Hence $det(\Lambda)=1$. Additionally, for any polygon $P$ in the $xy$-plane, $|P\cap\mathbb{Z}^3|$ and $|\partial P\cap\mathbb{Z}^3|$ reduce to $|P\cap\mathbb{Z}^2|$ and $|\partial P\cap\mathbb{Z}^2|$, respectively. Making the necessary substitutions and rearranging, we recover Pick's original theorem.
\begin{equation}\label{eqn:Picks}
    Area(P)=\left|\overset{\circ}{P}\cap\mathbb{Z}^2\right|+\frac{|\partial P\cap\mathbb{Z}^2|}{2}-1
\end{equation}
This is equivalent to the original formulation in \cite{pick1899geometrisches}. Pick's original proof however, did not rely directly on Euler's formula. They instead used a similar combinatorial technique by gluing triangles together and applying the inclusion-exclusion principle.
\medskip

\noindent{\bf Computing $det(\Lambda)$.}\;
Let $V\subset\mathbb{R}^3$ be the vector space with planar equation $ax+by+cz=0$, for integers $a$, $b$, and $c$, and let $\Lambda=V\cap\mathbb{Z}^3$. We compute $det(\Lambda)$ by constructing bases vectors $\vec{v}_1$ and $\vec{v}_2$ and computing the area of the fundamental parallelogram with sides $\vec{v}_1$ and $\vec{v}_2$. Constructing $\vec{v}_1$ is easy enough, let
\[
    \vec{v}_1=\left\langle\frac{b}{\gcd(a,b)},\;\frac{-a}{\gcd(a,b)},\;0\right\rangle.
\]
For the second basis vector, we let
\[
    \vec{v}_2=\left\langle\frac{c}{\gcd(a,b,c)}s,\;\frac{c}{\gcd(a,b,c)}t,\;-\gcd\left(\frac{a}{\gcd(a,b,c)},\frac{b}{\gcd(a,b,c)}\right)\right\rangle,
\]
where 
\[
    \frac{a}{\gcd(a,b,c)}s+\frac{b}{\gcd(a,b,c)}t=\gcd\left(\frac{a}{\gcd(a,b,c)},\frac{b}{\gcd(a,b,c)}\right).
\]
\begin{wrapfigure}{r}{0.33\textwidth}
    \begin{center}
    \begin{tikzpicture}[x=0.5pt,y=0.55pt,yscale=-1,xscale=1]
    \draw    (218.01,185.39) -- (217.66,36.35) ;
    \draw [shift={(217.65,33.35)}, rotate = 89.87] [fill={rgb, 255:red, 0; green, 0; blue, 0 }  ][line width=0.08]  [draw opacity=0] (8.93,-4.29) -- (0,0) -- (8.93,4.29) -- cycle    ;
    \draw    (393.16,106.8) -- (426.01,91.6) ;
    \draw [shift={(428.74,90.34)}, rotate = 155.16] [fill={rgb, 255:red, 0; green, 0; blue, 0 }  ][line width=0.08]  [draw opacity=0] (8.93,-4.29) -- (0,0) -- (8.93,4.29) -- cycle    ;
    \draw    (413.36,209.63) -- (218.01,185.39) ;
    \draw [shift={(416.33,210)}, rotate = 187.07] [fill={rgb, 255:red, 0; green, 0; blue, 0 }  ][line width=0.08]  [draw opacity=0] (8.93,-4.29) -- (0,0) -- (8.93,4.29) -- cycle    ;
    \draw  [fill={rgb, 255:red, 74; green, 144; blue, 226 }  ,fill opacity=0.53 ] (219.21,89.99) -- (393.16,106.8) -- (348.12,200.6) -- cycle ;
    \draw    (218.01,185.39) -- (290.65,151.93) ;
    \draw (217.65,31.73) node [anchor=south] [inner sep=0.75pt]   [align=left] {$\displaystyle z$};
    \draw (434.61,88.72) node [anchor=south west] [inner sep=0.75pt]   [align=left] {$\displaystyle y$};
    \draw (418.33,210) node [anchor=west] [inner sep=0.75pt]   [align=left] {$\displaystyle x$};
    \draw (346.12,203.6) node [anchor=north east] [inner sep=0.75pt]   [align=left] {$\displaystyle ( a,0,0)$};
    \draw (395.16,109.8) node [anchor=north west][inner sep=0.75pt]   [align=left] {$\displaystyle ( 0,b,0)$};
    \draw (221.21,86.99) node [anchor=south west] [inner sep=0.75pt]   [align=left] {$\displaystyle ( 0,0,c)$};
    \end{tikzpicture}
    \end{center}
    \caption{An axial triangle.}
\end{wrapfigure}
We leave it to the reader to verify that $\{\vec{v}_1,\vec{v}_2\}$ is indeed a basis. One strategy is to show that the triangle with sides $\vec{v}_1$, $\vec{v}_2$, and $\vec{v}_1-\vec{v}_2$ is special. Computing $|\vec{v}_1\times\vec{v}_2|$ gives
\begin{equation}\label{eqn:det of Lambda}
    det(\Lambda)=\frac{\sqrt{a^2+b^2+c^2}}{\gcd(a,b,c)}.
\end{equation}

Lastly, we introduce the main object of study in Chapter \ref{sec:main results}.
\begin{definition}
    We call a triangle $T$ in $\mathbb{R}^3$ {\it axial} if it has integral vertices with exactly one vertex contained on each axis.
\end{definition}
Let $T$ be an axial triangle. The vertices of $T$ have the form $(a,0,0)$, $(0,b,0)$, and $(0,0,c)$, for some nonzero $a,b,c\in\mathbb{Z}$. It follows that $T$ is necessarily contained in an octant of $\mathbb{R}^3$. Note that any two octants can be exchanged through a series of rigid transformations of $\mathbb{R}^3$ that preserve the lattice $\mathbb{Z}^3$. Therefore, we may assume that $a$, $b$, and $c$ are all positive, thus placing $T$ in the first octant.
    
\subsection{Restricted Partition Functions}

First, we establish some notation and useful formulas. For a more in-depth analysis, consult \cite{beck2015computing}. Let $x$ be a real number.  The {\it fractional part} of $x$ is denoted by $\{x\}$ and defined by
\[ 
    \{x\}=x-\lfloor x\rfloor,
\]
where the {\it flooring function} of $x$ is defined by $\lfloor x\rfloor=max\{n\in\mathbb{Z}\,|\,n\le x\}$. We will also need the {\it ceiling function}, $\lceil x\rceil=\lfloor x\rfloor+1$, for $x\notin\mathbb{Z}$. If $x\in\mathbb{Z}$, we declare $\lceil x\rceil=x$. We will often employ the following formula for the flooring function. Given a positive integer $b$ and a nonnegative integer $a$, 
\[
    \lfloor a/b\rfloor=\frac{a-a\mod{b}}{b}.
\]

We collect a few more identities and facts about fractional parts and flooring functions that will be of use to us later on.  
\begin{lemma}\label{lem: fractional facts}
    Let $n,a,b\in\mathbb{Z}_{>0}$ where $b\nmid a$. Then
    \[
        \left\{n-\frac{a}{b}\right\}=1-\left\{\frac{a}{b}\right\}.
    \]
    For positive integers $i$ and $j$ with $i\equiv j\mod{\frac{b}{\gcd(a,b)}}$, we have
    \[
        \left\{\frac{ia}{b}\right\}=\left\{\frac{ja}{b}\right\}.
    \]
\end{lemma}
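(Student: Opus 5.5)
Both identities follow directly from the definition $\{x\}=x-\lfloor x\rfloor$ together with the fact that adding an integer to $x$ shifts $\lfloor x\rfloor$ by that same integer, so $\{x+m\}=\{x\}$ for every $m\in\mathbb{Z}$. I will prove the two parts separately.

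For the first identity, since $b\nmid a$ the number $a/b$ is not an integer, so $0<\{a/b\}<1$. Write $a/b=\lfloor a/b\rfloor+\{a/b\}$. Then
\[
    n-\frac{a}{b}=\left(n-\left\lfloor\frac{a}{b}\right\rfloor-1\right)+\left(1-\left\{\frac{a}{b}\right\}\right).
\]
The first bracket is an integer, and the second lies strictly between $0$ and $1$. By uniqueness of the decomposition of a real number into an integer part plus a part in $[0,1)$, the floor of $n-a/b$ is $n-\lfloor a/b\rfloor-1$, and its fractional part is $1-\{a/b\}$. This is the claim. The only point needing care is the strict inequality $\{a/b\}>0$, which comes from $b\nmid a$ and is exactly why that hypothesis is present.

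For the second identity, set $g=\gcd(a,b)$ and write $i=j+k\frac{b}{g}$ with $k\in\mathbb{Z}$. Then
\[
    \frac{ia}{b}=\frac{ja}{b}+k\frac{a}{g},
\]
and $a/g$ is an integer, so $ia/b$ and $ja/b$ differ by an integer. By the shift-invariance of the fractional part, they have the same fractional part.

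Neither step presents a real obstacle. The only subtlety is getting the floor right in the first part, where the uniqueness of the integer-plus-fractional decomposition does the work.
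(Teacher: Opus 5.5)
Your proof is correct and complete. The paper states this lemma without proof, so there is no argument of the authors to compare against. Your two steps supply the verification the paper leaves implicit:
\begin{itemize}
\item the decomposition $n-\frac{a}{b}=\bigl(n-\lfloor a/b\rfloor-1\bigr)+\bigl(1-\{a/b\}\bigr)$, combined with $0<\{a/b\}<1$ from $b\nmid a$ and uniqueness of the integer-plus-fractional-part decomposition;
\item the observation that $\frac{ia}{b}-\frac{ja}{b}=k\frac{a}{\gcd(a,b)}\in\mathbb{Z}$.
\end{itemize}
Your argument never uses positivity of $n$, $a$, $b$, $i$, or $j$; it needs only integrality and $b\neq0$. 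This matters for the paper, because in Proof 1 of Theorem \ref{thm:Clara1} the lemma is applied with $\beta a$ in the role of $n$, $\beta a i$ in the role of $a$, and $\delta$ in the role of $b$, and the sign of $\beta$ is never fixed there. Your proof covers that application as written.
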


Lastly, a well known formula used to compute compounded flooring:
\begin{equation}\label{eqn:compounded flooring}
    \left\lfloor\frac{\lfloor a/b\rfloor}{c}\right\rfloor=\left\lfloor\frac{a}{bc}\right\rfloor.
\end{equation}

We now describe the main tool used in our first proof of Theorem \ref{thm:A}.
\begin{definition}
    Given a positive integer $n$ and a set of positive integers $A=\{a_1,\,a_2,\,\ldots,\,a_k\}$, we define the {\it restricted partition function} of $n$ with respect to $A$ by
    \[
        P_{A}(n)=\#\left\{(x_1,\,x_2,\,\ldots,\,x_k)\in\mathbb{Z}_{\ge0}^k\;|\;a_1x_1+a_2x_2+\cdots+a_kx_k=n\right\}.
    \]
\end{definition}
In other words, $P_A(n)$ counts the number of solutions in nonnegative integers to the equation
\[
    a_1x_1+a_2x_2+\cdots+a_kx_k=n.
\]
$P_{\{a_1,\ldots,a_k\}}(n)$ is often referred to elsewhere in the literature as the {\sl denumerant of $a_1,\ldots,a_k$}.

The following is known as {\it Popoviciu's Theorem.} The original statement and proof are found in \cite{popoviciu1953asupra}. 
\begin{lemma}\label{lem:P of a,b}
    Let $a,b\in\mathbb{Z}_{>0}$ with $d=\gcd(a,b)$. Then
    \begin{equation*}
        P_{\{a,b\}}(n)=\begin{cases}
        \frac{nd}{ab}-\left\{\frac{\beta n}{a}\right\}-\left\{\frac{\alpha n}{b}\right\}+1 & {\rm if}\;\;d|n\\
        \hspace{.5cm}0 & {\rm otherwise},
        \end{cases}
    \end{equation*}
    where $\beta\frac{b}{d}\equiv1\mod{\frac{a}{d}}$ and $\alpha\frac{a}{d}\equiv1\mod{\frac{b}{d}}$.
\end{lemma}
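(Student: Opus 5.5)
The plan is to reduce to the case $\gcd(a,b)=1$, then count the solutions directly. In that case they form a single arithmetic progression, whose length can be read off from its two endpoints.

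\textbf{Reduction to $d=1$.} If $d\nmid n$, then $ax+by$ is always a multiple of $d$, so $P_{\{a,b\}}(n)=0$. If $d\mid n$, put $a'=a/d$, $b'=b/d$, $n'=n/d$. Then $ax+by=n$ is equivalent to $a'x+b'y=n'$, so $P_{\{a,b\}}(n)=P_{\{a',b'\}}(n')$. The right-hand side of the claimed formula is unchanged by this substitution:
\[
\frac{nd}{ab}=\frac{n'}{a'b'},\qquad \frac{\beta n}{a}=\frac{\beta n'}{a'},\qquad \frac{\alpha n}{b}=\frac{\alpha n'}{b'},
\]
and $\alpha,\beta$ are defined in terms of $a',b'$ only. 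So it suffices to prove the formula when $\gcd(a,b)=1$. In that case $\alpha a\equiv 1 \pmod b$ and $\beta b\equiv 1\pmod a$.

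\textbf{Structure of the solutions when $\gcd(a,b)=1$.} A nonnegative solution of $ax+by=n$ requires $ax\equiv n\pmod b$, that is, $x\equiv \alpha n\pmod b$. Conversely, any such $x\ge 0$ with $ax\le n$ gives a unique solution $y=(n-ax)/b\ge 0$. Hence the solutions are exactly $x=x_0+kb$ for $k\ge 0$, with $ax\le n$, where
\[
x_0=b\left\{\frac{\alpha n}{b}\right\}
\]
is the least nonnegative residue of $\alpha n$ modulo $b$. Symmetrically, the least admissible $y$ is $y_0=a\{\beta n/a\}$, and it corresponds to the largest admissible $x$, namely $x_1=(n-by_0)/a$. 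Counting the progression from $x_0$ to $x_1$ in steps of $b$ gives
\[
P_{\{a,b\}}(n)=\frac{x_1-x_0}{b}+1=\frac{n-ax_0-by_0}{ab}+1=\frac{n}{ab}-\left\{\frac{\beta n}{a}\right\}-\left\{\frac{\alpha n}{b}\right\}+1 .
\]

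\textbf{The main obstacle.} The count above is valid only when $x_1\ge x_0$, i.e.\ when at least one solution exists. It remains to show the formula returns $0$ when there are none. The key is the quantity $N=n-ax_0-by_0$.
\begin{itemize}
\item $N\equiv 0\pmod b$, because $ax_0\equiv n\pmod b$.
\item $N\equiv 0\pmod a$, because $by_0\equiv n\pmod a$.
\item Since $\gcd(a,b)=1$, these combine to give $ab\mid N$.
\end{itemize}
Next bound $N$ from below. Using $x_0\le b-1$, $y_0\le a-1$ and $n>0$,
\[
N\ge n-a(b-1)-b(a-1)>-2ab .
\]
So either $N\ge 0$ or $N=-ab$. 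If $N\ge 0$, then $x_1\ge x_0$, solutions exist, and the computation above applies. If $N=-ab$, then $x_1<x_0$, so there are no solutions, and the formula gives $N/(ab)+1=0$. In both cases the formula holds.
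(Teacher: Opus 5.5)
Your proof is correct. The paper takes a different route in the trivial sense that it gives no argument for this lemma at all: it attributes the result to Popoviciu \cite{popoviciu1953asupra}. The standard textbook route, e.g.\ in \cite{beck2015computing}, handles the coprime case analytically through a partial-fraction expansion of a generating function. You instead count the solutions directly as the single progression $x_0,x_0+b,\dots,x_1$. You then dispose of the empty case with the observation that $N=n-ax_0-by_0$ is a multiple of $ab$ exceeding $-2ab$.

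Your version is elementary, which suits the paper's stated aims. It also justifies two things the paper uses without comment. First, Popoviciu's theorem is normally stated for $\gcd(a,b)=1$, but Proof 1 of Theorem \ref{thm:Clara1} applies the lemma to $P_{\{bc,ac\}}$, where $\gcd(bc,ac)=c\gcd(a,b)$. Your reduction step confirms that the normalization in the statement is the right one: the term $\frac{nd}{ab}$, and $\beta$ inverted modulo $\frac{a}{d}$ while the fractional part $\{\beta n/a\}$ is taken over $a$ (similarly for $\alpha$). Second, your bound $N\ge n-2ab+a+b>-2ab$ needs only $n\ge 0$, not $n>0$. So the formula also holds at $n=0$, where it correctly gives $1$, and the paper needs this for the $i=\delta$ term in that same proof.

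One step deserves an explicit line: why $N=-ab$ forces zero solutions. Any solution $(x,y)$ has $x\ge x_0$ and $y\ge y_0$, since these are the least nonnegative members of the forced residue classes. Hence $n=ax+by\ge ax_0+by_0$, i.e.\ $N\ge 0$. This also shows that solutions exist exactly when $N\ge 0$, which is what justifies calling $y_0$ the least admissible $y$.
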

More generally, we have the following recursive formula (\cite[Exercise 1.27]{beck2015computing}).
\begin{equation}\label{eqn:recursive}
    P_{\{a_1,\ldots,a_k\}}(n)=\sum_{i\ge0}P_{\{a_1,\ldots,a_{k-1}\}}(n-ia_k),
\end{equation}
where, by convention, $P_{A}(n)=0$ if $n<0$.
\medskip

Finally, consider an axial triangle $T$ with vertices $(a,0,0)$, $(0,b,0)$, and $(0,0,c)$, where $a,b,c\in\mathbb{Z}_{>0}$. An equation for the plane containing $T$ is then
\begin{equation}\label{eqn:T}
        bcx+acy+abz=abc,
\end{equation}
and a connection between restricted partition functions and integral points on polyhedra is immediately seen by
\[
    P_{\{bc,ac,ab\}}(abc)=|T\cap\mathbb{Z}^3|.
\]
Furthermore, since a point $(x,y,z)$ is contained in $\partial T$ if and only if at least one of its coordinates is 0, the number of integral points in $\overset{\circ}{T}$ equals the number of solutions in positive integers to Equation \ref{eqn:T}. 

\subsection{Quasi-Polynomials and Ehrhart Theory}

We call a function $q(n)$ {\it periodic with period $p$} if $q(n+p)=q(n)$ for all $n$, and $p$ is the minimum positive integer with this property. Note that if $q$ has period $p$, then $q(n+kp)=q(n)$ for any integers $k$ and $n$. Similarly, if $q(n+r)=q(r)$ for all $n$, then $p$ divides $r$. For example, let $q(n)=\gcd(a,bn)$ for relatively prime positive integers $a$ and $b$. We claim the period of $q$ is $a$. Indeed, 
\[
    q(n+a)=\gcd(a,bn+ab)=\gcd(a,bn)=q(n).
\]
Thus, the period of $q$ divides $a$. Let $p$ be the period of $q$. Then $\gcd(a,bn+bp)=\gcd(a,bn)$ for all $n\in\mathbb{Z}$. Setting $n=0$ yields $\gcd(a,bp)=\gcd(a,0)=a$. But $\gcd(a,b)=1$ and thus $\gcd(a,bp)=\gcd(a,p)=p$ implying $p=a$.

A {\it quasi-polynomial} is a function of the form
\[
    q(t)=a_{n}(t)t^n+a_{n-1}(t)t^{n-1}+\cdots+a_1(t)t+a_0(t),
\]
where the coefficients $a_i(t)$ are themselves periodic functions of $t$. If $a_i(t)$ has period $p_i$, then $q(t)$ has period ${\rm lcm}(p_0,\ldots,p_n)$. More generally, if $Q_1,\ldots,Q_k$ are quasi-polynomials with periods $P_1,\ldots,P_k$, respectively, then the period of $Q_1(t)+\cdots+Q_k(t)$ is ${\rm lcm}(P_1,\ldots,P_k)$.

We now provide a brief summary of Ehrhart's celebrated theorem for rational polyhedra and give simple examples. We begin with a polyhedron $P\subset\mathbb{R}^k$ whose vertices are the terminal points of vectors $\vec{v}_1,\ldots,\vec{v}_n\in\mathbb{R}$. The standard way of scaling $P$ amounts to multiplying the vectors $\vec{v}_1,\ldots,\vec{v}_n$ by a scalar matrix whose diagonal entries all equal $t$, for some $t\in\mathbb{Z}_{>0}$. The result is a polyhedron, $tP$, that has been {\it scaled by $t$}. In \cite{ehrhart1962polyedres}, it is shown that, for polyhedra whose vertices contain rational coordinates, the function $\mathcal{L}(t,P)=|tP\cap\mathbb{Z}^k|$ is evaluated by a quasi-polynomial. However, if the terminal points of $\vec{v}_1,\ldots,\vec{v}_n$ are all integral points, then $\mathcal{L}(t,P)$ is evaluated by a polynomial.
\begin{example}\label{example:quasi polynomial}
First, consider the unit square $S$ with vertices $(0,0)$, $(1,0)$, $(0,1)$, and $(1,1)$. Scaling $S$ by $t$ yields a square with vertices $(0,0)$, $(t,0)$, $(0,t)$, and $(t,t)$. We can apply Pick's Theorem (Equation \ref{eqn:Picks}) to enumerate the number of points on $tS$:
\[
    \mathcal{L}(t,S)=|tS\cap\mathbb{Z}^2|=t^2+\frac{4t}{2}+1.
\]
If we instead scale the rectangle $R$ with vertices $(0,0)$, $(1,0)$, $(0,\frac{1}{2})$, and $(1,\frac{1}{2})$ by $t$, we find
\[
    \mathcal{L}(t,R)=\frac{1}{2}t^2+\left(\frac{3-t\;\text{mod}\,2}{2}\right)t+1-\frac{t\;\text{mod}\,2}{2},
\]
a quasi-polynomial with period 2.
\end{example}

More generally, Ehrhart in \cite{ehrhart1962polyedres} analyzed the generating function $E(z)=1+\sum_{t=1}^\infty\mathcal{L}(t,P)z^t$, for a polyhedron $P$. The {\it Ehrhart quasi-polynomial} is then obtained by analyzing the roots of $E(z)$.
\section{Main Results}\label{sec:main results}

\subsection{Ehrhart Polynomials of Triangles in \texorpdfstring{$\mathbb{R}^3$}{}}\label{subsec:axial triangles}

We begin this section by providing two proofs of Theorem \ref{thm:A}. The first proof is a direct application of Popoviciu's Theorem (Lemma \ref{lem:P of a,b}) that avoids the heavy machinery commonly used to derive the same result. Our second proof uses Pick's Theorem adapted to rank-2 integer lattices in $\mathbb{R}^3$ (Lemma \ref{lem:Picks in 3D}). Both proofs apply foundational results found in standard texts. See \cite{barvinok2008integer} and \cite{beck2015computing}.
\begin{theorem}\label{thm:Clara1}
    Given $a,b,c\in\mathbb{Z}_{>0}$, 
    \[
        P_{\{bc,ac,ab\}}(abc)=\frac{\gcd(bc,ac,ab)}{2}+\frac{{\gcd(a,b)+\gcd(a,c)+\gcd(b,c)}}{2}+1.
    \]
\end{theorem}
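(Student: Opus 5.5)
I would prove the formula directly from the recursive formula (Equation \ref{eqn:recursive}) together with Popoviciu's Theorem (Lemma \ref{lem:P of a,b}), as the paper's first proof advertises.

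\textbf{Step 1: slice the triangle.} I write
\[
P_{\{bc,ac,ab\}}(abc)=\sum_{z=0}^{c}P_{\{bc,ac\}}(ab(c-z)),
\]
and set $k=c-z$, which runs from $0$ to $c$. Each summand counts the solutions of $bcx+acy=abk$, that is, the integral points on one horizontal slice of $T$.

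\textbf{Step 2: reduce the gcds.} Put $g=\gcd(a,b)$, $a=ga'$, $b=gb'$. Then $\gcd(bc,ac)=cg$, and the slice equation becomes $b'x+a'y=\frac{a'b'g}{c}k$. This has solutions only when $c\mid a'b'gk$, i.e.\ when $k$ is a multiple of $m:=c/\gcd(c,a'b'g)$. Writing $k=jm$ with $0\le j\le c/m$, Popoviciu applied to the coprime pair $(b',a')$ and $N=a'b'gk/c=\lambda j$ gives
\[
\frac{\lambda j}{a'b'}-\Big\{\frac{\beta\lambda j}{b'}\Big\}-\Big\{\frac{\alpha\lambda j}{a'}\Big\}+1,
\]
where $\lambda=a'b'g\,m/c$.

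\textbf{Step 3: sum the pieces.} I sum over $j=0,\dots,c/m$ in three parts.

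The linear term sums to $\frac{\lambda}{a'b'}\cdot\frac{(c/m)(c/m+1)}{2}$. The constant term contributes $c/m+1$.

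For the fractional parts, the first is periodic in $j$ with period $b'/\gcd(b',\lambda)$, and I must check that this period divides $c/m$. Over a full residue system the fractional parts $\{\beta\lambda j/b'\}$ take each value $r/q$, with $q=b'/\gcd(b',\lambda)$ and $r=0,\dots,q-1$, exactly $(c/m)/q$ times. The endpoint $j=c/m$ is then handled separately: there the fraction is an integer, so it contributes $0$. This makes each fractional sum equal to $\frac{c/m}{q}\cdot\frac{q-1}{2}$. The analogous computation applies to $a'$, using Lemma \ref{lem: fractional facts}. These half-sums are what produce the terms $\gcd(a,c)/2$ and $\gcd(b,c)/2$.

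\textbf{Step 4: simplify, the main obstacle.} The hard part is the bookkeeping: expressing $c/m=\gcd(c,a'b'g)$, $\lambda$, $\gcd(b',\lambda)$ and $\gcd(a',\lambda)$ in terms of the original quantities, and showing that everything collapses to
\[
\tfrac12\gcd(bc,ac,ab)+\tfrac12\big(\gcd(a,b)+\gcd(a,c)+\gcd(b,c)\big)+1.
\]
Here the first term arises as the quadratic part $\frac{\lambda}{a'b'}\frac{(c/m)^2}{2}$, and the $\gcd(a,b)$ term comes from the $k=c$ endpoint slice.

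To keep this manageable, I would first reduce to the case $\gcd(a,b,c)=1$. Both sides transform compatibly under scaling $a,b,c$ by a common factor? No, they do not, since $T$ changes. So instead I would carry out the prime-by-prime verification using $p$-adic valuations: $\gcd(c,a'b'g)$ and the needed gcd identities are multiplicative, so it suffices to check them with $v_p(a),v_p(b),v_p(c)$ ordered in each of the possible ways.

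A fallback is to check the identity $\sum(\text{pieces})=\text{RHS}$ symbolically in the coprime-pairs case first, and then to extend it by the valuation argument.
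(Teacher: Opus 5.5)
Your proposal is correct and is essentially the paper's Proof 1. You slice by $z$ via the recursion, keep only the $\delta+1$ slices with $k$ a multiple of $m=c/\delta$, where $\delta=\gcd(c,\mathrm{lcm}(a,b))$, apply Popoviciu on each slice, and sum the linear, fractional and constant parts separately. Your direct period count for the fractional sums (using $\gcd(\beta,b')=1$ and $b'\mid\lambda\delta$) simply replaces the paper's block splitting and its separate proof that $\gcd(\delta,\beta a)=\gcd(c,a)$.

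Your Step 4 needs no $p$-adic case analysis. From $\lambda\delta=a'b'g$ you get $\lambda/(a'b')=g/\delta$ and $g\delta=\gcd(gc,ab)=\gcd(ab,bc,ca)$. You also get $\delta\gcd(b',\lambda)=\gcd(b'\delta,a'b'g)=b'\gcd(\delta,a)=b'\gcd(a,c)$, so $\delta/q=\gcd(a,c)$, and symmetrically $\gcd(b,c)$ for the $a'$ term. With these, the pieces combine directly to the stated formula.
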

\begin{proof}[Proof 1.]
    Using Equation \ref{eqn:recursive},
    \begin{equation*}
        P_{\{bc,ac,ab\}}(abc) = \overset{c}{\underset{i=0}{\sum}} P_{\{bc,ac\}}(abc - iab).
    \end{equation*}   
    \noindent Now, for the allotted values of $i$, $P_{\{bc,ac\}}(abc - iab) = 0$ if and only if $\gcd(bc,ac) \nmid abc- iab$.
    \noindent Assume that $\gcd(ac,bc) \nmid abc -iab$. Clearly, $\gcd(ac,bc)$ divides $abc$. So, it must be the case that $c\gcd(a,b) \nmid iab$. Dividing both sides by $\gcd(a,b)$ gives us $c \nmid i\,\textrm{lcm}(a,b)$.
    Let $\delta = \gcd(c, \textrm{lcm}(a,b))$. Then, $\frac{c}{\delta} \nmid i\frac{\textrm{lcm}(a,b)}{\delta}$ can be simplified to $\frac{c}{\delta} \nmid i$.
    To summarize, if $\frac{c}{\delta} \nmid i$, then $P_{\{bc,ac,ab\}}(abc)=0$. So, by assessing what values for $i$ allow $\frac{c}{\delta} | i$, we can find all values where $P_{\{bc,ac,ab\}}(abc)\neq0$. Clearly, $\frac{c}{\delta} | i$ for $i = 0, \frac{c}{\delta}, 2\frac{c}{\delta}, 3\frac{c}{\delta},...,\delta\frac{c}{\delta}$. Thus we have 
    $P_{\{bc,ac,ab\}}(abc)= \overset{\delta}{\underset{i=0}{\sum}} P_{\{bc,ac\}}(abc-i\frac{abc}{\delta})$.
    
    Let $d=\gcd(b,a)$. Then $\gcd(ac,bc)=cd$. Next, we apply Lemma \ref{lem:P of a,b}, which allows us to write\\ $\overset{\delta}{\underset{i=0}{\sum}} P_{\{bc,ac\}}\left(abc-i\frac{abc}{\delta}\right)$ as

    \begin{equation}\label{expr:pop} 
        \overset{\delta}{\underset{i=0}{\sum}} \left( \frac{\left(abc-i\frac{abc}{\delta}\right)cd}{bc(ac)} - \left\{ \frac{\beta\left(abc - i\frac{abc}{\delta}\right)}{bc}\right\} - \left\{ \frac{\alpha\left(abc - i\frac{abc}{\delta}\right)}{ac}\right\} +1 \right).
    \end{equation}
    where $\beta\frac{ac}{cd}\equiv1\,\text{mod}\,{\frac{bc}{cd}}$ and $\alpha\frac{bc}{cd}\equiv1\,\text{mod}\,\frac{ac}{cd}.$

    First, let us focus on the first term of Expression \ref{expr:pop}. 
    \begin{align*}
        \overset{\delta}{\underset{i=0}{\sum}} \frac{(abc-i\frac{abc}{\delta})cd}{bc(ac)} &= \overset{\delta}{\underset{i=0}{\sum}}\left(1-\frac{i}{\delta}\right) \gcd(b,a) \\
        &= \gcd(a,b)\left(\overset{\delta}{\underset{i=0}{\sum}}1-\overset{\delta}{\underset{i=0}{\sum}}\frac{i}{\delta}\right)\\
        &= \gcd(a,b)(\delta+1)-\frac{\gcd(a,b)(\delta+1)}{2}\\
        &= \frac{\gcd(a,b)(\delta+1)}{2}.
    \end{align*}
    Recall that $\delta = \gcd(c,\text{lcm}(a,b))$. Simplifying further we have,
    $$\frac{\gcd(a,b)(\delta +1)}{2} = \frac{\gcd(ac, bc, ab)}{2} + \frac{\gcd(a,b)}{2}.$$
    Thus, we have handled the first term of Expression \ref{expr:pop}.

    Now, we will consider the second term: $\overset{\delta}{\underset{i=0}{\sum}}\left\{ \frac{\beta\left(abc - i\frac{abc}{\delta}\right)}{bc}\right\}$. Note the definition of the fractional part of $\frac{\beta n}{a}$, 
    \begin{align*}
        \left \{\frac{\beta n}{a}\right\}&=\frac{\beta n}{a}- \left\lfloor\frac{\beta n}{a} \right\rfloor \\
        &= \frac{\beta n}{a} - \left( \frac{\beta n - \beta n\,\text{mod}\,a}{a}\right).
    \end{align*}
    Using this definition we can rewrite the second term of Expression \ref{expr:pop}.

    \begin{align*}
        \overset{\delta}{\underset{i=0}{\sum}}\left\{ \frac{\beta\left(abc - i\frac{abc}{\delta}\right)}{bc}\right\} &= \overset{\delta}{\underset{i=0}{\sum}}\left\{ \frac{\beta abc}{bc}- \frac{\beta \frac{abc}{\delta}i}{bc} \right\} \\
        &= \overset{\delta}{\underset{i=0}{\sum}} \left\{ \beta a - \frac{\beta a i}{\delta}\right\} .
    \end{align*}
     Note that whenever $\beta a - \frac{\beta ai}{\delta}$ is an integer, $\left\{ \beta a - \frac{\beta a i}{\delta}\right\} = 0$. So, we first break the sum into $\gcd(\beta a , \delta)$ sums, then we omit all terms that are equal to zero and apply the second portion of Lemma 2.8. This gives us,
    {\footnotesize\begin{align}\label{EQN:beakingsum}
        \overset{\delta}{\underset{i=0}{\sum}} \left\{ \beta a - \frac{\beta a i}{\delta}\right\}  &= \overset{\frac{\delta}{\gcd(\beta a, \delta)}}{\underset{i=0}{\sum}} \left\{ \beta a - \frac{\beta a i}{\delta}\right\} +\overset{2\frac{\delta}{\gcd(\beta a, \delta)}}{\underset{i=\frac{\delta}{\gcd(\beta a, \delta)}}{\sum}} \left\{ \beta a - \frac{\beta a i}{\delta}\right\}+\cdots+\overset{\delta}{\underset{i=\delta - \frac{\delta}{\gcd(\beta a, \delta)}}{\sum}} \left\{ \beta a - \frac{\beta a i}{\delta}\right\}\nonumber\\
        = &\overset{\frac{\delta}{\gcd(\beta a, \delta)}-1}{\underset{i=1}{\sum}} \left(1 - \left\{ \frac{\beta ai}{\delta}\right\}\right) +\overset{2\frac{\delta}{\gcd(\beta a, \delta)}-1}{\underset{i=\frac{\delta}{\gcd(\beta a, \delta)}+1}{\sum}} \left(1 - \left\{ \frac{\beta ai}{\delta}\right\}\right) +\cdots+\overset{\delta-1}{\underset{i=\delta - \frac{\delta}{\gcd(\beta a, \delta)}+1}{\sum}} \left(1 - \left\{ \frac{\beta ai}{\delta}\right\}\right).
    \end{align}}
    Since $i\equiv i+k\frac{\delta}{\gcd(\beta a,\delta)}\mod\frac{\delta}{\gcd(\beta a,\delta)}$ for any integer $k$, we can apply Lemma \ref{lem: fractional facts} and obtain
    \begin{equation*}
         \overset{\delta}{\underset{i=0}{\sum}} \left\{ \beta a - \frac{\beta a i}{\delta}\right\}=\gcd(\delta,\beta a)\sum_{i=1}^{\frac{\delta}{\gcd(\beta a, \delta)}-1}\left(1 - \left \{ \frac{\beta ai}{\delta}\right \}\right).
    \end{equation*}
    Isolating the sum, we have
    \begin{equation*}
        \overset{\frac{\delta}{\gcd(\beta a, \delta)}-1}{\underset{i=1}{\sum}} 1 - \left \{ \frac{\beta ai}{\delta}\right \}
        = \overset{\frac{\delta}{\gcd(\beta a, \delta)}-1}{\underset{i=1}{\sum}} 1 - \frac{\beta ai\, \text{mod}\,\delta}{\delta}.
    \end{equation*}
    Now we factor out $\frac{-1}{\delta}$ from the second term so that we can work to further reduce the expression.
    \begin{align*}
        \sum_{i=1}^{\frac{\delta}{\gcd(\beta a, \delta)}-1} 1 - \frac{\beta ai\, \text{mod}\,\delta}{\delta}
        &=\sum_{i=1}^{\frac{\delta}{\gcd(\beta a, \delta)}-1} 1-\frac{1}{\delta}\sum_{i=1}^{\frac{\delta}{\gcd(\beta a, \delta)}-1} \beta ai \, \text{mod}\,\delta \\
        &= \frac{\delta}{\gcd(\beta a, \delta)}-1 - \left( \frac{\gcd(\beta a , \delta)}{\gcd(\beta a , \delta)}\right)\frac{1}{\delta}\sum_{i=1}^{\frac{\delta}{\gcd(\beta a, \delta)}-1} \beta ai\,\text{mod}\,\delta \\
        &= \frac{\delta}{\gcd(\beta a, \delta)}-1 - \gcd(\beta a,\delta)\frac{1}{\delta}\sum_{i=1}^{\frac{\delta}{\gcd(\beta a, \delta)}-1} \frac{\beta ai}{\gcd(\beta a ,\delta)}\, \text{mod}\,\frac{\delta}{\gcd(\beta a , \delta)}.
    \end{align*}
    Since $\frac{\beta ai}{\gcd(\beta a ,\delta)}$ and $\text{mod}\frac{\delta}{\gcd(\beta a , \delta)}$ are relatively prime, this allows us to write
    \[
        \sum_{i=1}^{\frac{\delta}{\gcd(\beta a, \delta)}-1}\frac{\beta ai}{\gcd(\beta a ,\delta)}\, \text{mod}\,\frac{\delta}{\gcd(\beta a , \delta)}=\sum_{i=1}^{\frac{\delta}{\gcd(\beta a, \delta)}-1}i=\frac{\left(\frac{\delta}{\gcd(\beta a ,\delta)}-1\right)\frac{\delta}{\gcd(\beta a ,\delta)}}{2}.
    \]
    Bringing it all together, 
    \begin{align*}
        \sum_{i=0}^\delta\left\{ \frac{\beta\left(abc - i\frac{abc}{\delta}\right)}{bc}\right\} &= \gcd(\beta a,\delta)\left( \frac{\delta}{\gcd(\beta a, \delta)}-1-\frac{\gcd(\beta a,\delta)}{\delta}\frac{\left(\frac{\delta}{\gcd(\beta a ,\delta)}-1\right)\frac{\delta}{\gcd(\beta a ,\delta)}}{2}\right)\\
        &=\frac{\delta}{2} - \frac{\gcd(\beta a,\delta)}{2}.
    \end{align*}
    
    For the third term of \ref{expr:pop}, $\overset{\delta}{\underset{i=0}{\sum}}\left\{ \frac{\alpha\left(abc - i\frac{abc}{\delta}\right)}{ac}\right\}$, the process will mirror the above and become $\frac{\delta}{2} - \frac{\gcd(\alpha b,\delta)}{2}$.

    Next, we must remove $\beta$ and $\alpha$ from the expressions $\frac{\delta}{2} - \frac{\gcd(\beta a,\delta)}{2}$ and $\frac{\delta}{2} - \frac{\gcd(\alpha b,\delta)}{2}$, respectively. We will do so by showing that $\gcd(\delta, \beta a) =\gcd(c,a)$ and $\gcd(\delta, \alpha b) =\gcd(c,b)$. First,
    recall that $\delta = \gcd(\text{lcm}(a,b),c)$ and the definitions of $\beta$, $\alpha$, and $d$ in our case are:
    \begin{align*}
        \beta \frac{ac}{cd} &\equiv   1\,\text{mod}\,\frac{bc}{cd} \hspace{2cm}  \alpha \frac{bc}{cd} \equiv   1\,\text{mod}\,\frac{ac}{cd} \hspace{2cm} 
        d = \gcd(a,b).
    \end{align*}
    Also, we know that $\beta \frac{ac}{cd} \equiv   1\text{mod}\frac{bc}{cd}$ implies that $\beta \frac{ac}{cd} = 1+ k\frac{bc}{cd}$ for some $k \in \mathbb{Z}$.
    \\
    
    Now, note that $\gcd(\delta, \beta a)=\gcd(\gcd(c,\text{lcm}(a,b)), \beta a)=\gcd(\gcd(c,(\text{lcm}(a,b),\beta a) $. Since $\beta a \equiv d \, \text{mod}\, b$, we have $\beta a = d + kb$ for some $k \in \mathbb{Z}$. Therefore,
    \begin{align*}
        \gcd(\delta, \beta a) &= \gcd\left(c,\gcd\left(\frac{ab}{d}, d+kb\right)\right) \\ 
        &= \gcd\left(c,\gcd\,d\left(\frac{a}{d}\frac{b}{d}, 1+k\frac{b}{d}\right)\right).
    \end{align*}
    We can see that $1+k\frac{b}{d}$ and $\frac{b}{d}$ are relatively prime. So,
    \begin{align*}
        \gcd\left(c,\gcd\,d\left(\frac{a}{d}\frac{b}{d}, 1+k\frac{b}{d}\right)\right) &= \gcd\left(c,\gcd\,d\left(\frac{a}{d}, 1+k\frac{b}{d}\right)\right) \\
        &= \gcd\left(c,\gcd\,d\left(\frac{a}{d}, \frac{\beta a }{d}\right)\right) \\
        &= \gcd\left(c,\gcd\,\left(a, \beta a\right)\right) \\
        &= \gcd(c,a).
    \end{align*}
    \noindent
    Thus, we have shown that $\gcd(\delta, \beta a) = \gcd(c,a)$, and by the same methods, $\gcd(\delta,\alpha b) = \gcd(c,b)$.
    \\
    Next, we can substitute these values into $\frac{\delta}{2}-\frac{\gcd(\beta a, \delta)}{2}$ and $\frac{\delta}{2}-\frac{\gcd(\alpha b, \delta)}{2}$, giving us $\frac{\delta}{2}-\frac{\gcd(c,a)}{2}$ and $\frac{\delta}{2}-\frac{\gcd(c,b)}{2}$, respectively. 

    Clearly, the final term of Expression \ref{expr:pop}, $\overset{\delta}{\underset{i=0}{\sum}} 1 = \delta +1$.

    Now that we have addressed all of the terms in \ref{expr:pop}, we can see the following:
    \begin{align*}
         P_{\{bc,ac,ab\}}(abc) 
         &= \sum_{i=0}^\delta\left(\frac{(abc-i\frac{abc}{\delta}) \gcd(bc,ac)}{bc(ac)} - \left\{ \frac{\beta(abc - i\frac{abc}{\delta})}{bc}\right\} - \left\{ \frac{\alpha(abc - i\frac{abc}{\delta})}{ac}\right\} +1 \right)\\
         &= \frac{\gcd(ac, bc, ab)}{2} + \frac{\gcd(a,b)}{2} - \left(\frac{\delta}{2}-\frac{\gcd(c,a)}{2}\right)-\left(\frac{\delta}{2}-\frac{\gcd(c,b)}{2}\right)+\delta +1 \\ 
         &= \frac{\gcd(ac, bc, ab)}{2} + \frac{\gcd(a,b)}{2}+\frac{ \gcd(c,a)}{2}+ \frac{\gcd(c,b)}{2} +1.
    \end{align*}
\end{proof}
\begin{proof}[Proof 2.]
    We perform the following transformations on $\mathbb{R}^3$. Shift by $-a$ along the $x$-axis, then reflect in the $yz$-plane. Given positive integers $a,b,c$, this has the effect of taking the plane $bcx+acy+abz=abc$ to the vector space $V$ with planar equation $bcx-acy-abz=0$. Let $\Lambda=V\cap\mathbb{Z}^3$. Since the transformations just described preserve the lattice $\mathbb{Z}^3\subset\mathbb{R}^3$, it follows that a special triangle on the plane $bcx+acy+abz=abc$ has area $det(\Lambda)/2$. 

    Now, $P_{\{bc,ac,ab\}}(abc)$ equals $|T\cap\mathbb{Z}^3|$, where $T$ is the triangle with vertices $(a,0,0)$, $(0,b,0)$, and $(0,0,c)$. If $T'$ denotes the image of $T$ under the transformations above then $|T\cap\mathbb{Z}^3|=|T'\cap\mathbb{Z}^3|$, $|\partial T\cap\mathbb{Z}^3|=|\partial T'\cap\mathbb{Z}^3|$, and $Area(T)=Area(T')$. Then, by Lemma \ref{lem:Picks in 3D},
    \begin{equation}\label{eqn:proof 2}
        |T\cap\mathbb{Z}^3|=\frac{Area(T)}{det(\Lambda)}+\frac{|\partial T\cap\mathbb{Z}^3|}{2}+1.
    \end{equation}
    By Equation \ref{eqn:det of Lambda}, 
    \[
        det(\Lambda)=\frac{\sqrt{(bc)^2+(ac)^2+(ab)^2}}{\gcd(bc,ac,ab)}.
    \]
    To get the area of $T$, we compute half the magnitude of
    \[
        \langle-a, b, 0\rangle\times\langle-a, 0, c\rangle=\langle bc,\; ac,\; ab\rangle.
    \]
    This gives
    \[
        Area(T)=\frac{\sqrt{(bc)^2+(ac)^2+(ab)^2}}{2}.
    \]
    Finally, note that the boundary of $T$ consists of three line segments, one in each coordinate plane. It follows that 
    \[
        |\partial T\cap\mathbb{Z}^3|=\gcd(a,b)+\gcd(a,c)+\gcd(b,c).
    \]
    The desired formula now follows after performing the necessary substitutions in Equation \ref{eqn:proof 2}.
\end{proof}
As a special case, let $a$, $b$, and $c$ be positive integers. The axial triangle with vertices $(bc,0,0)$, $(0,ac,0)$, and $(0,0,ab)$ lies on the plane with equation $ax+by+cz=abc$, and Theorem \ref{thm:Clara1} allows us to compute $P_{\{a,b,c\}}(abc)$. More generally, we have the following, which appears as Corollary \ref{cor:B} in the Introduction.
\begin{corollary}\label{cor:MainResult}
    Given $a,b,c\in\mathbb{Z}_{>0}$, let $l={\rm lcm}(a,b,c)$. For any nonnegative integer $k$,
    \[
        P_{\{a,b,c\}}(kl)=k^2\frac{\gcd(l^2/bc,l^2/ac,l^2/ab)}{2}+k\left(\frac{\gcd(l/a,l/b)+\gcd(l/a,l/c)+\gcd(l/b,l/c)}{2}\right)+1.
    \]
\end{corollary}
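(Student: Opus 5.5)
The idea is to realize the solution set of $ax+by+cz=kl$ as the integral points of an axial triangle and then apply Theorem \ref{thm:Clara1}. Set $A=kl/a$, $B=kl/b$, $C=kl/c$; these are nonnegative integers because $a,b,c$ all divide $l$. For $k=0$ both sides equal $1$, since the only solution is $(0,0,0)$, so assume $k\ge1$. Then $A,B,C>0$, and the plane $ax+by+cz=kl$ meets the axes at $(A,0,0)$, $(0,B,0)$, $(0,0,C)$. Because $a,b,c>0$, the nonnegative integer solutions are exactly the integral points of the axial triangle $T_k$ with these vertices. Hence $P_{\{a,b,c\}}(kl)=|T_k\cap\mathbb{Z}^3|=P_{\{BC,AC,AB\}}(ABC)$.

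Applying Theorem \ref{thm:Clara1} with $(a,b,c)$ replaced by $(A,B,C)$ gives
\[
P_{\{a,b,c\}}(kl)=\frac{\gcd(BC,AC,AB)}{2}+\frac{\gcd(A,B)+\gcd(A,C)+\gcd(B,C)}{2}+1 .
\]
Since $A=k\,(l/a)$, $B=k\,(l/b)$ and $C=k\,(l/c)$, homogeneity of $\gcd$ gives $\gcd(A,B)=k\gcd(l/a,l/b)$, and similarly for the other two pairs. Likewise $BC=k^2\, l^2/(bc)$ and so on, so $\gcd(BC,AC,AB)=k^2\gcd(l^2/bc,l^2/ac,l^2/ab)$. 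Substituting these yields exactly the claimed expression.

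There is no real obstacle here. The only points to watch are that the triangle really is axial with integral vertices, which uses $a,b,c\mid l$ and $k\ge1$, and that the identity $\gcd(kx,ky)=k\gcd(x,y)$ is applied to both the pairwise and the triple $\gcd$.
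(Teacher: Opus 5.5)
Your proposal is correct and matches the paper's proof: the same substitution $A=kl/a$, $B=kl/b$, $C=kl/c$, the same identification of the solutions of $ax+by+cz=kl$ with those of $BCx+ACy+ABz=ABC$, then Theorem~\ref{thm:Clara1} and homogeneity of $\gcd$. Your separate treatment of $k=0$ is a small improvement, since there the triangle degenerates and Theorem~\ref{thm:Clara1}, which needs positive entries, does not apply; the paper glosses over this case.
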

\begin{proof}
    Let $A=k\frac{l}{a}$, $B=k\frac{l}{b}$, and $C=k\frac{l}{c}$. First note that $(x,y,z)$ is a solution to
    \[
        BCx+ACy+ABz=ABC
    \]
    if and only if it is a solution to $ax+by+cz=kl$. The desired expression then follows from computing
    \begin{align*}
        \frac{\gcd(BC,AC,AB)}{2}+\frac{{\gcd(A,B)+\gcd(A,C)+\gcd(B,C)}}{2}+1.
    \end{align*}
\end{proof}
\begin{remark}\label{rmk:Ehrhart Polynomials}
    We now describe the Ehrhart Polynomial corresponding to an axial triangle $T$ with vertices $(a,0,0)$, $(0,b,0)$, and $(0,0,c)$ by applying Theorem \ref{thm:Clara1} to the axial triangle with vertices $(ta,0,0)$, $(0,tb,0)$, and $(0,0,tc)$:
    \[
        \mathcal{L}(t,T)=P_{\{t^2bc,t^2ac,t^2ab\}}(t^3abc)= t^2\frac{\gcd(bc,ac,ab)}{2}+t\left(\frac{\gcd(a,b)+\gcd(a,c)+\gcd(b,c)}{2}\right)+1.
    \]
    We note however that Ehrhart's method for deriving such expressions was quite different. In a nutshell, Ehrhart equated the generating function $E(z)$ with expanding about the poles of certain rational expressions. Whereas our Proof 1 of Theorem \ref{thm:A} used mostly modular arithmetic.
\end{remark}

We end this section with a brief investigation of a useful property of Ehrhart polynomials often referred to as {\it Ehrhart-Macdonald reciprocity}:
\[
    \mathcal{L}(t,\overset{\circ}{P})=(-1)^d\mathcal{L}(-t,P),
\]
where $d$ is the dimension of $P$. When $d$ is even, this simplifies to $\mathcal{L}(t,\overset{\circ}{P})=\mathcal{L}(-t,P)$.

For the axial triangle $T$, the vertices of $tT$ are $(ta,0,0)$, $(0,tb,0)$, and $(0,0,tc)$, and $|\partial tT\cap\mathbb{Z}^3|=t(\gcd(a,b)+\gcd(a,c)+\gcd(b,c))$. It follows that
\begin{align*}
    \mathcal{L}(t,\overset{\circ}{T})=|\overset{\circ}{tT}\cap\mathbb{Z}^3|&=|tT\cap\mathbb{Z}^3|-|\partial tT\cap\mathbb{Z}^3| \\
    &=t^2\frac{\gcd(bc,ac,ab)}{2}-t\frac{\gcd(a,b)+\gcd(a,c)+\gcd(b,c)}{2}+1\\
    &=\mathcal{L}(-t,T),
\end{align*}
verifying the reciprocity formula for axial triangles. Recall that $|\overset{\circ}{tT}\cap\mathbb{Z}^3|$ counts the number of solutions in positive integers to $bcx+acy+abz=tabc$.
\begin{example}\label{example:2}
    Let $t\in\mathbb{Z}_{>0}$ and $S$ denote the  axial triangle with vertices $(1,0,0)$, $(0,1,0)$, and $(0,0,1)$. As in Example \ref{example:1},
    \[
        \mathcal{L}(t,S)=\frac{t^2}{2}+\frac{3t}{2}+1.
    \]
    This equals $\binom{t+2}{2}$, the number of ways of choosing 2 objects from $t+2$ objects. By reciprocity,
    \[
        \mathcal{L}(t,\overset{\circ}{S})=\frac{t^2}{2}-\frac{3t}{2}+1=\binom{t-1}{2}.
    \]
\end{example}
Examples \ref{example:1} and \ref{example:2} illustrate a special case of the classic {\sl stars and bars} problem, \cite{feller1950introduction}. In a nutshell, Feller was interested in determining how many ways $n$ stars could be placed into $r$ cells. For a schematic, they used $r-1$ vertical bars to separate the cells. The formula
\[
   \binom{n+r-1}{r-1} 
\]
determines the desired number of ways. If each cell is required to contain at least one star then the number of ways of distributing the $n$ stars is given by
\[
    \binom{n-1}{r-1}.
\]
Example \ref{example:2} resolves the case when $r=3$.

\subsection{Bounds on \texorpdfstring{$P_{\{a,b,c\}}(n)$}{}}\label{subsec:bounds}

In this section, we attempt to apply the same method of proof as Proof 1 of Theorem \ref{thm:A} in the more general setting. While we are unable to arrive at a closed formula that works in general, we discover relatively simple upper and lower bounds on $P_{\{a,b,c\}}(n)$, Theorem \ref{thm:C}. We will first describe an easy-to-obtain upper bound.
\medskip

If we make no assumptions between $n$ and $a$, $b$, or $c$, it is possible for the plane with equation $ax+by+cz=n$ to intersect the axes at points with rational coordinates. However, the triangle $T$ with coordinates $(n/a,0,0)$, $(0,n/b,0)$, and $(0,0,n/c)$ still satisfies $P_{\{a,b,c\}}(n)=|T\cap\mathbb{Z}^3|$. 

Even if $T$ has vertices which are not integral points, the projection of $T$ to any of the three coordinate planes is injective. It follows that any integral point on $T$ corresponds to a unique integral point in the projection to any of the three coordinate planes. We can use this fact to find an upper bound on $P_{\{a,b,c\}}(n)$.
Specifically, suppose $a\le b\le c$. Then projecting the triangle $T$ with vertices $(n/a,n/b,n/c)$ onto the $yz$-plane yields the least upper bound. If $P$ denotes this projection, then $|P\cap\mathbb{Z}^2|$ gives an upper bound for $|T\cap\mathbb{Z}^3|$. For a concrete formula, we apply Pick's Theorem to the triangle with vertices $(0,0)$, $(\lceil n/b\rceil,0)$, and $(0,\lceil n/c\rceil)$. This yields
\[
    |T\cap\mathbb{Z}^3|\le \frac{\lceil n/b\rceil\lceil n/c\rceil}{2}+\frac{\lceil n/b\rceil+\lceil n/c\rceil+\gcd\left(\lceil n/b\rceil,\lceil n/c\rceil\right)}{2}+1.
\]
Equality holds when $a=0$, $b|n$, and $c|n$.
\medskip

Before proving Theorem \ref{thm:C}, first note that if $\gcd(a,b,c)=d>1$ then $ax+by+cz=n$ has solutions if and only if $d$ divides $n$. We can thus assume $\gcd(a,b,c)=1$. We can further reduce to the case that $a$, $b$, and $c$ are pairwise relatively prime. Let $g_1=\gcd(b,c)$, $g_2=\gcd(a,c)$, and $g_3=\gcd(a,b)$. We denote by $a'$, $b'$, and $c'$ the inverses of $a$ modulo $g_1$, $b$ modulo $g_2$, and $c$ modulo $g_3$, respectively. With $n_1=na'\mod g_1$, $n_2=nb'\mod g_2$, and $n_3=nc'\mod g_3$, we define the substitutions
\[
    A = \frac{a}{g_2 g_3}, \hspace{1cm} B = \frac{b}{g_3 g_1}, \hspace{1cm} C = \frac{c}{g_1 g_2}.
\]
Then $(x_0,y_0,z_0)$ is a solution to $ax+by+cz=n$ if and only if $\left(\frac{x_0-n_1}{g_1},\frac{y_0-n_2}{g_2},\frac{z_0-n_3}{g_3}\right)$ is a solution to $Ax+By+Cz=N$, where
$$N = \frac{n - a n_1 - b n_2 - c n_3}{g_1 g_2 g_3}.$$
The precise derivation can be found in ~\cite[Section 2.1]{Binner2021Combinatorial}. Thus, there is no loss in generality in assuming $a$, $b$, and $c$ are pairwise relatively prime.
\begin{proof}[Proof of Theorem \ref{thm:C}]
    Let $a,b,c\in\mathbb{Z}_{>0}$ be pairwise relatively prime. Then
    \[
        P_{\{a,b,c\}}(n)=\sum_{i\ge0}P_{\{a,b\}}(n-ic).
    \]
    Since $\gcd(a,b)=1$, $P_{\{a,b\}}(n-ic)$ is nonzero for each $i=0,\ldots,\lfloor n/c\rfloor$. Thus,
    \[
        P_{\{a,b,c\}}(n)=\sum_{i=0}^{\lfloor n/c\rfloor}P_{\{a,b\}}(n-ic).
    \]
    Applying Lemma \ref{lem:P of a,b} yields
    \[
        P_{\{a,b,c\}}(n)=\sum_{i=0}^{\lfloor n/c\rfloor}\left(\frac{n-ic}{ab}-\left\{\frac{\beta(n-ic)}{a}\right\}-\left\{\frac{\alpha(n-ic)}{b}\right\}+1\right),
    \]
    where $\beta b\equiv1\mod a$ and $\alpha a\equiv1\mod b$. Many of the terms are easy to combine.
    \begin{equation*}
        \sum_{i=0}^{\lfloor n/c\rfloor}\left(\frac{n-ic}{ab}+1\right)=\left(\left\lfloor\frac{n}{c}\right\rfloor+1\right)\left(\frac{n+n\mod c}{2ab}+1\right).
    \end{equation*}
    Consider the sums
    \[
        \sum_{i=0}^{\lfloor n/c\rfloor}\left\{\frac{\beta(n-ic)}{a}\right\}\hspace{.5cm}\&\hspace{.5cm}\sum_{i=0}^{\lfloor n/c\rfloor}\left\{\frac{\alpha(n-ic)}{b}\right\}.
    \]
    We have
    \begin{equation*}
        \sum_{i=0}^{\lfloor n/c\rfloor}\left\{\frac{\beta(n-ic)}{a}\right\}=\sum_{i=0}^{\lfloor n/c\rfloor}\left(\frac{\beta(n-ic)}{a}-\left\lfloor\frac{\beta(n-ic)}{a}\right\rfloor\right)
        =\sum_{i=0}^{\lfloor n/c\rfloor}\left(\frac{\beta(n-ic)\mod a}{a}\right).
    \end{equation*}
    Let $m=\lfloor n/c\rfloor$, and write $m=qa+r_a$, for $0\le r_a<a$ and $q>0$. Then
    \[
        \sum_{i=0}^m\beta(n-ic)\mod a=\sum_{i=0}^{aq-1}\beta(n-ic)\mod a+\sum_{i=aq}^{aq+r_a}\beta(n-ic)\mod a.
    \]
    Now,
    \[
        \sum_{i=0}^{aq-1}\beta(n-ic)\mod a=q\sum_{i=1}^{a-1}i=q\frac{a(a-1)}{2}=(\lfloor n/c\rfloor-\lfloor n/c\rfloor\mod a)\frac{a-1}{2}.
    \]
    Thus,
    \[
        \sum_{i=0}^{\lfloor n/c\rfloor}\left\{\frac{\beta(n-ic)}{a}\right\}=\frac{a-1}{2}\left\lfloor\frac{\lfloor n/c\rfloor}{a}\right\rfloor+\frac{1}{a}\sum_{i=0}^{r_a}\beta(n-ic)\mod a.
    \]
    Similarly, we can write $m=q'b+r_b$, for $0\le r_b<b$ and $q'>0$. Then
    \[
        \sum_{i=0}^{\lfloor n/c\rfloor}\left\{\frac{\alpha(n-ic)}{b}\right\}=\frac{b-1}{2}\left\lfloor\frac{\lfloor n/c\rfloor}{b}\right\rfloor+\frac{1}{b}\sum_{i=0}^{r_b}\alpha(n-ic)\mod b.
    \]
    Now, the sum
    \[
        \sum_{i=0}^{r}\beta(n-ic)\mod a
    \]
    is always greater than or equal to 
    \[
        \sum_{i=0}^{r_a}i=\frac{r_a(r_a+1)}{2}
    \]
    and less than or equal to
    \[
        \sum_{i=0}^{r_a}a-1-i=(r_a+1)(a-1)-\frac{r_a(r_a+1)}{2}=(r_a+1)\frac{2a-r_a-2}{2}.
    \]
    Similarly,
    \[
        \frac{r_b(r_b+1)}{2}\le\sum_{i=0}^{r_b}\alpha(n-ic)\mod b\le(r_b+1)\frac{2b-r_b-2}{2}.
    \]
    For convenience in notation, given $a,b,c,n\in\mathbb{Z}_{>0}$, we define
    \begin{equation}\label{eqn:upper bound 1}
        N_{\{a,b,c\}}(n)=\left(\left\lfloor\frac{n}{c}\right\rfloor+1\right)\left(\frac{n+n\mod c}{2ab}+1\right).
    \end{equation}
    An upper bound for $P_{\{a,b,c\}}(n)$ is then
    \[
        U_{\{a,b,c\}}(n)=N_{\{a,b,c\}}(n)-\left(\frac{a-1}{2}\left\lfloor\frac{n}{ac}\right\rfloor+\frac{r_a(r_a+1)}{2a}\right)-\left(\frac{b-1}{2}\left\lfloor\frac{n}{bc}\right\rfloor+\frac{r_b(r_b+1)}{2b}\right).
    \]
    And, a lower bound for $P_{\{a,b,c\}}(n)$ is given by
    \[
        L_{\{a,b,c\}}(n)=N_{\{a,b,c\}}(n)-\left(\frac{a-1}{2}\left\lfloor\frac{n}{ac}\right\rfloor+(r_a+1)\frac{2a-r_a-2}{2a}\right)-\left(\frac{b-1}{2}\left\lfloor\frac{n}{bc}\right\rfloor+(r_b+1)\frac{2b-r_b-2}{2b}\right).
    \]

    Now, let $Gap(n)=U_{\{a,b,c\}}(n)-L_{\{a,b,c\}}(n)$. Then
    \[
        Gap(n)=(r_a+1)\frac{a-r_a-1}{a}+(r_b+1)\frac{b-r_b-1}{b}.
    \]
    Letting $x=r_a+1$, the curve of $y=\frac{1}{a}x(a-x)$ is a parabola with a maximum value achieved when $x=a/2$. Similarly, the maximum value of $y=\frac{1}{b}x(b-x)$ is achieved when $x=b/2$. Thus,
    \begin{equation}\label{eqn:gap bound}
        Gap(n)\;\le\;(a/2)\frac{a-a/2}{a}+(b/2)\frac{b-b/2}{b}\;=\;\frac{a+b}{4}.
    \end{equation}

    Lastly, to show that $U_{\{a,b,c\}}(n)$ and $L_{\{a,b,c\}}(n)$ both have period $abc$, observe that $N_{\{a,b,c\}}(n)$ has period $c$, $\frac{a-1}{2}\left\lfloor\frac{n}{ac}\right\rfloor$ has period $ac$, $\frac{b-1}{2}\left\lfloor\frac{n}{bc}\right\rfloor$ has period $bc$, while $\frac{r_a(r_a+1)}{2a}$ and $(r_a+1)\frac{2a-r_a-2}{2a}$ both have period $a$, and $(r_b+1)\frac{2b-r_b-2}{2b}$ and $\frac{r_b(r_b+1)}{2b}$ both have period $b$. It follows that the periods of $U_{\{a,b,c\}}(n)$ and $L_{\{a,b,c\}}(n)$ are both ${\rm lcm}(a,b,c,ac,bc)={\rm lcm}(ac,bc)$. And, since $a$, $b$, and $c$ are pairwise relatively prime, lcm$(ac,bc)=c\,{\rm lcm}(a,b)=abc$.
\end{proof}

Observe that when $a$, $b$, and $c$ are relatively prime and $n$ is a multiple of $abc$, $P_{\{a,b,c\}}(n)=U_{\{a,b,c\}}(n)$. Indeed, in this case, $r_a=r_b=0$, $n=kabc$ for some $k\in\mathbb{Z}_{>0}$, and the expression for $U_{\{a,b,c\}}(n)$ reduces to
\begin{align*}
    U_{\{a,b,c\}}(kabc)&=\left(kab+1\right)\left(\frac{kc}{2}+1\right)-\left(\frac{kab-kb+kab-ka}{2}\right)\\
    &=k^2\frac{abc}{2}+k\frac{a+b+c}{2}+1,
\end{align*}
which agrees with Corollary \ref{cor:MainResult}.
\begin{example}
    Given $N\in\mathbb{Z}_{>0}$, the equation $5x+10y+25z=N$ has solutions if and only if $5|N$. Ignoring the cases with no solutions, we are able to simplify by dividing by $5$ yielding $x+2y+5z=n$, where $n=\frac{N}{5}$. Performing the necessary substitutions and simplifying, 
    $$U_{\{1,2,5\}}(n) = \left( \left\lfloor \frac{n}{5} \right\rfloor + 1 \right) \left( \frac{n + (n \bmod 5)}{4} + 1 \right) - \frac{1}{2}\left( \left\lfloor\frac{n}{10}\right\rfloor + \left\lfloor\frac{n}{5}\right\rfloor \bmod 2 \right)$$
    and
    $$L_{\{1,2,5\}}(n) = \left( \left\lfloor \frac{n}{5} \right\rfloor + 1 \right) \left( \frac{n + (n \bmod 5)}{4} + 1 \right) - \frac{1}{2}\left( \left\lfloor\frac{n}{10}\right\rfloor + 1 \right).$$
    In this case,
    $$\text{Gap}(n) = \frac{1 - \left(\left\lfloor \frac{n}{5}\right\rfloor\bmod 2\right)}{2}.$$
    Thus, whenever $\left\lfloor \frac{n}{5}\right\rfloor$ is odd, $U_{\{1,2,5\}}(n)$ and $L_{\{1,2,5\}}(n)$ both equal $P_{\{1,2,5\}}(n)$. When$\left\lfloor \frac{n}{5}\right\rfloor$ is even, $P_{\{1,2,5\}}(n)$ is given by whichever of $U_{\{1,2,5\}}(n)$ or $L_{\{1,2,5\}}(n)$ is an integer.

    More generally, when $a=1$ and $b=2$, we have $\text{Gap}(n)\le\frac{3}{4}$. In this case, $P_{\{1,2,c\}}(n)$ is easily obtained by calculating either $U_{\{1,2,c\}}(n)$ or $L_{\{1,2,c\}}(n)$ and then rounding appropriately. 
\end{example}
\begin{remark}
    Equation \ref{eqn:upper bound 1} yields another upper bound on $P_{\{a,b,c\}}(n)$. The bound is sharp when both $a$ and $b$ divide both $n$ and $c$.
\end{remark}
\begin{remark}\label{rmk:Frob number}
    For pairwise relatively prime positive integers $a$, $b$, and $c$, let $N=N_{\{a,b,c\}}(n)$. The {\it $k$th Frobenius number}, a concept introduced in \cite{BeRo2004}, for $a$, $b$, and $c$ is the largest integer $n$ such that $P_{\{a,b,c\}}(n)=k$ ($k=0$ is the classical case). Then, for $n$ to be the $k$th Frobenius number of $a$, $b$, and $c$, by applying Equation \ref{eqn:gap bound} we must have
    \begin{equation}\label{eqn:Frob bounds}
        k+R_a+R_b+\frac{a+b}{4}-N\ge0,
    \end{equation}
    where $R_a=\frac{a-1}{2}\left\lfloor\frac{n}{ac}\right\rfloor+\frac{r_a(r_a+1)}{2a}$ and $R_b=\frac{b-1}{2}\left\lfloor\frac{n}{bc}\right\rfloor+\frac{r_b(r_b+1)}{2b}$. Now, $N$ is quadratic as a function of $n$, whereas all remaining terms are either linear functions of $n$ or bounded. It follows that the $k$th Frobenius number exists for any triple $a,b,c$ of pairwise relatively prime positive integers.
    
    This is by no means a new result. In fact, a number of bounds have been developed and exist in the literature. Recently, algorithms for computing upper and lower bounds can be found in \cite{TRIPATHI2017368}. Improvements on the algorithms contained therein are provided by \cite{suhajda2026frobeniusnumbervariables}. The upper bound obtained via Equation \ref{eqn:Frob bounds} is looser than those found in the aforementioned references. 
\end{remark}

\section{Future Work}

\begin{enumerate}[label={\roman*.}\;]
    \item The upper and lower bounds from Theorem \ref{thm:C} apply when we have three terms $a,b,c$. We expect that the bounds can be recursively extended to give bounds in the case of arbitrarily many terms $a_1,\ldots,a_k$. We ask whether this can be accomplished in such a way that the difference between the upper and lower bounds remains uniformly bounded? Once such bounds are achieved, an analysis of the algorithmic complexity required to compute them is warranted, as well as a comparison with algorithms currently in use to compute $P_A(n)$.
    \item The closed formulas computed in Section \ref{subsec:axial triangles} apply to very specific triangles in $\Lambda$. Can they be generalized to give closed formulas for $|P\cap\mathbb{Z}^3|$ for a more general class of polygons in $\Lambda$? 
\end{enumerate}

\bibliographystyle{amsalpha}
\bibliography{name}
\end{document}